%% file: samplepaper.tex
\documentclass[runningheads]{llncs}
\usepackage[T1]{fontenc}

\input{math_commands.tex}

\usepackage{graphicx}

\usepackage{hyperref}
\usepackage{url}
\usepackage{algorithm}
\usepackage{algpseudocode}
\usepackage{amsmath}
\usepackage{amsthm}
\usepackage{amsthm}

\usepackage{amsthm}

\newtheoremstyle{dotstyle}
  {\topsep}   
  {\topsep}   
  {\itshape}  
  {}          
  {\bfseries} 
  {.}         
  {0.5em}     
  {}          

\theoremstyle{dotstyle}

\newtheorem{assumption}{Assumption}   

\newcommand{\gk}{\tilde g(x_k)}
\newcommand{\fk}{f(x_k)}
\newcommand{\nfk}{\nabla f(x_k)}
\newcommand{\nnfk}{\|\nabla f(x_k)\|}
\newcommand{\fnfk}{\dfrac{\nfk}{\nnfk}}
\newcommand{\Rk}{\|x_k-x^*\|}
\newcommand{\Rkn}{\|x_{k+1}-x^*\|}
\newcommand{\inner}[2]{\left\langle #1, #2 \right\rangle}
\newtheoremstyle{dotstyle_def}
  {\topsep}{\topsep}{\normalfont}{}{\bfseries}{.}{0.5em}{}

\theoremstyle{dotstyle_def}

\newenvironment{e*}
  {\begin{equation*}\begin{aligned}}
  {\end{aligned}\end{equation*}}

\begin{document}
\title{Normalized First-Order Methods for (Quasi)-Convex \((L_0,L_1)\)-Smooth Optimization with Inexact Gradients}
%
%

\author{Evgeniy Kovalev \inst{1}\orcidID{0009-0001-6024-4563} \and
Fedor Stonyakin\inst{1,2}\orcidID{0000-0002-9250-4438}}
\authorrunning{F. Author et al.}
%
\institute{Innopolis University, Russia \and
Moscow Institute of Physics and Technology, Russia}
\maketitle              
\begin{abstract}
Generalized smoothness, such as \((L_0,L_1)\)-smoothness, have recently attracted considerable attention due to their ability to model optimization problems arising in modern machine and deep learning, where the classical Lipschitz assumptions of the gradient is often violated. At the same time, computing exact gradients may be impractical or computationally expensive in many applications. In this work, we study convex \((L_0,L_1)\)-smooth optimization under access only to a normalized approximation recently proposed Comparison Oracle, which returns an inexact normalized gradient in linear time with a bounded absolute error. Within this framework, we develop comparison-oracle variants of Normalized Gradient Descent (NGD) and Gradient Descent with Polyak stepsizes. In setting of NGD, we rely on strictly weaker assumption of quasi-convexity rather than convexity. We establish explicit upper bounds on the approximation error that guarantee convergence and derive convergence rates for all proposed methods. Unlike existing analyses, our results require neither classical smoothness assumptions nor access to exact gradients or their exact normalized counterparts. Finally, numerical experiments corroborate the theoretical findings. 

\keywords{Generalized Smoothness \and Comparison Oracle \and Inexact Gradient.}
\end{abstract}

\section{Introduction}\label{sec:introduction}
\subsection{Related Works}
Many optimization problems arising in modern machine learning violate the classical assumption of Lipschitz continuity of the gradient. This has motivated the development of generalized smoothness conditions that extend the applicability of first-order optimization methods beyond the standard smooth setting. Among them, the notion of \((L_0,L_1)\)-smoothness, introduced by \cite{zhang2020gradient}, has received considerable attention due to its ability to capture the behavior of practical models, including modern neural network architectures. Another reason why this weaker assumption is interesting for Deep Learning problems is because for many functions the constant $L$ can be much larger than constants $L_0$ and $L_1$ (see examples in \cite{gorbunov2024methodsl0l1}). Subsequently, several works extended this framework and established convergence guarantees for a variety of first-order methods under generalized smoothness assumptions.
\cite{zhang2020improved,chen2023generalizedsmooth,gorbunov2024methodsl0l1,takezawa2024parameterfree,koloskova2023revisiting,vyguzov2025frankwolfe}. In particular, \cite{chen2023generalizedsmooth} introduced the broad class of \(\alpha\)-symmetric \((L_0,L_1)\)-smooth functions, unifying previous definitions and showing that important problems such as phase retrieval and distributionally robust optimization satisfy this assumption. Throughout this paper, we adopt this generalized definition of \((L_0,L_1)\)-smoothness.

Existing analyses of optimization methods under \((L_0,L_1)\)-smoothness assume access to exact first-order information or its deterministic variants. However, in many applications computing the exact gradient is either computationally expensive or impossible. This issue arises, for example, in large-scale machine learning, distributed optimization, and gradient-free settings \cite{nesterov2017gradientfree,liu2020primer,zhouFL}. As a result, there is growing interest in developing optimization algorithms that remain reliable under inexact gradient information. Despite the extensive literature on generalized smoothness, the convergence behavior of fundamental gradient methods under inexact gradient information has remained largely unexplored.

In this work, we study convex \((L_0,L_1)\)-smooth optimization in the setting where only an approximation of the normalized gradient is available. This type of inexactness is  caused by using, for example, recently developed Comparison Oracle \cite{comparisons}. Building upon this assumption, we develop variants of Normalized Gradient Descent and Gradient Descent with Polyak stepsizes. For both methods, we establish explicit upper bounds on the admissible approximation error that guarantee convergence and derive the corresponding convergence rates. Notably, we show that Normalized Gradient Descent attains the same convergence rate under the weaker assumption of quasi-convexity as it does under convexity. Moreover, we suggest an adaptive version of the Polyak method, which does not require prior knowledge of the upper bound of the gradient norm along the trajectory. To the best of our knowledge, these are the first convergence guarantees for gradient methods under \((L_0,L_1)\)-smoothness that rely solely on approximate gradient information. Finally, numerical experiments corroborate the theoretical analysis and illustrate the practical behavior of the proposed algorithms.

\subsection{Our Contribution}

Building upon the comparison oracle framework, we propose adaptations of Normalized Gradient Descent (NGD) and the Gradient Descent with Polyak step sizes (Polyak-GD) that operate under inexact gradient information. The main contributions of this work are summarized as follows:

\begin{enumerate}
    \item Relying exclusively on approximate, normalized gradient directions subject to an absolute error $\delta$, we develop variants of Normalized Gradient Descent (NGD) for $(L_0,L_1)$-smooth quasi-convex functions and Polyak-GD for $(L_0,L_1)$-smooth convex functions.
    \item For both algorithms, we establish convergence rate and explicit upper bounds on the admissible approximation error $\delta$ required to guarantee convergence.
    \item We propose an adaptive formulation of Polyak-GD that eliminates the requirement of knowing an upper bound on the gradient norm in advance, demonstrating superior performance over standard baselines in numerical experiments.
    \item We perform numerical evaluations on synthetic $(L_0, L_1)$-smooth functions to validate our theoretical error bounds and illustrate the practical efficiency of both methods.
\end{enumerate}

To the best of our knowledge, these results provide the first formal convergence guarantees for gradient-based methods under generalized $(L_0, L_1)$-smoothness using solely approximate gradient evaluations.

\section{Problem Statement and Assumptions}\label{sec:assumptions}

In this work, we consider the following unconstrained minimization problem
\begin{equation}\label{eq:problem}
    \min_{x \in \mathbb{R}^d} f(x),
\end{equation}
where $f : \mathbb{R}^d \to \mathbb{R}$ is (strongly) convex differentiable function.
\begin{assumption}[Convexity]
\label{assum:convexity}
Function $f:\mathbb{R}^{d}\rightarrow\mathbb{R}$ is convex ($\mu$-strongly convex) with $\mu = 0$ ($\mu > 0)$. That is, for all $x, y \in \mathbb{R}^d$ we have
\begin{align}
    f(y) \ge f(x) + \langle\nabla f(x), y-x\rangle + \frac{\mu}{2}\|x-y\|^2.
\end{align}
\end{assumption}
The following assumption is used only in Section~\ref{ngd}, where it replaces Assumption~\ref{assum:convexity}.
\begin{assumption}[Quasi-convexity]
\label{assum:quasi-convexity}
Function $f:\mathbb{R}^{d}\rightarrow\mathbb{R}$ is differentiable and quasi-convex. That is, for all $x, y \in \mathbb{R}^d$ satisfying
\begin{align*}
        f(y)\le f(x),
\end{align*}
it holds that
\begin{align*}
    \langle \nabla f(x),\, y-x\rangle \le 0.
\end{align*}
\end{assumption}

As we mentioned, the objective function is considered to be $(L_0,L_1)$-smooth.

\begin{assumption}[$(L_0,L_1)$-smoothness]
\label{assum:smoothness}
Function $f:\mathbb{R}^{d}\rightarrow\mathbb{R}$ is called $(L_0,L_1)$-smooth if for all $x, y \in \mathbb{R}^d$ we have
\begin{align}
    \|\nabla f(x)-\nabla f(y)\| \le \left( L_0 + L_1 \sup_{z\in[x,y]}\|\nabla f(z)\| \right) \|x-y\|.
\end{align}
\end{assumption}

The following lemma, established in \cite{chen2023generalizedsmooth}, will serve as a useful tool throughout the subsequent sections of this paper.

\begin{lemma}\label{lemma:techlemma1}
    Let $f$ be $(L_0,L_1)$-smooth function. Then, for any  $x, y \in \mathbb{R}^d$ we have:
    \begin{equation*}
        f(y) \le f(x) + \langle \nabla f(x), y - x \rangle + \frac{L_0 + L_1 \|\nabla f(x)\|}{2} \exp(L_1 \|x - y\|) \|x - y\|^2.
    \end{equation*}
\end{lemma}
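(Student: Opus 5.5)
The idea is to write $f(y)-f(x)-\langle \nabla f(x),y-x\rangle$ as an integral along the segment and bound the gradient variation with Assumption~\ref{assum:smoothness}. Set $h=y-x$, $r=\|h\|$, and $x_t=x+th$ for $t\in[0,1]$. By the fundamental theorem of calculus and Cauchy--Schwarz,
\begin{equation*}
    f(y)-f(x)-\langle \nabla f(x),h\rangle=\int_0^1\langle \nabla f(x_t)-\nabla f(x),h\rangle\,dt\le r\int_0^1\|\nabla f(x_t)-\nabla f(x)\|\,dt .
\end{equation*}
Everything then reduces to a pointwise bound on $\|\nabla f(x_t)-\nabla f(x)\|$ that involves only $\|\nabla f(x)\|$.

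The first and hardest step is to remove the supremum in Assumption~\ref{assum:smoothness}. Let $\phi(t)=\|\nabla f(x_t)\|$ and $M(t)=\sup_{s\in[0,t]}\phi(s)$. Applying the assumption on the subsegment $[x_s,x_{s'}]$ gives
\begin{equation*}
    |\phi(s')-\phi(s)|\le\bigl(L_0+L_1\sup_{[s,s']}\phi\bigr)r|s'-s|,
\end{equation*}
so $\phi$ is continuous, hence bounded on $[0,1]$, and then Lipschitz. I will prove the Gr\"onwall-type estimate
\begin{equation*}
    L_0+L_1\phi(t)\le (L_0+L_1\phi(0))e^{L_1rt}.
\end{equation*}
Take $u(t)=L_0+L_1M(t)$. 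It is nondecreasing and satisfies $u(t)\le u(0)+L_1r\int_0^t u(s)\,ds$. To get this, fix $t$ and pick $s^*\le t$ at which $\phi$ attains $M(t)$. Split $[0,s^*]$ into small pieces and apply the assumption on each piece, using the local supremum on that piece. In the limit this gives $\phi(s^*)-\phi(0)\le r\int_0^{s^*}(L_0+L_1M(s))\,ds$; the Riemann sums converge because $M$ is monotone. Gr\"onwall then yields $u(t)\le u(0)e^{L_1rt}$. The delicate part is exactly this fine-partition argument, since the supremum is taken over the whole subsegment and not at an endpoint. Monotonicity of $M$ and continuity of $\phi$ make the limit legitimate.

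Next, apply Assumption~\ref{assum:smoothness} on $[x,x_t]$ and use the estimate above:
\begin{equation*}
    \|\nabla f(x_t)-\nabla f(x)\|\le (L_0+L_1M(t))\,rt\le (L_0+L_1\|\nabla f(x)\|)\,e^{L_1rt}\,rt .
\end{equation*}
Substituting into the integral gives
\begin{equation*}
    f(y)-f(x)-\langle\nabla f(x),h\rangle\le (L_0+L_1\|\nabla f(x)\|)\,r^2\int_0^1 t\,e^{L_1rt}\,dt\le \frac{L_0+L_1\|\nabla f(x)\|}{2}\,e^{L_1r}\,r^2,
\end{equation*}
where the last step uses $e^{L_1rt}\le e^{L_1r}$ and $\int_0^1t\,dt=\tfrac12$. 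This is the claimed inequality of Lemma~\ref{lemma:techlemma1}.
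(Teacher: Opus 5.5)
Your proof is correct and follows essentially the route the paper relies on for this lemma: the paper cites it from \cite{chen2023generalizedsmooth} and reproduces the same framework in its proof of Lemma~\ref{lemma:my}, namely removing the supremum by a fine-partition argument, running Gr\"onwall along the segment, and integrating the resulting gradient-difference bound. The only variation is that you apply Gr\"onwall to the running maximum $L_0+L_1M(t)$ rather than to $H(\theta)=L_0\theta+L_1\int_0^\theta\|\nabla f(z_u)\|\,du$, which gives the factor $r^2\int_0^1 t e^{L_1 r t}\,dt$ instead of the slightly sharper $(e^{L_1 r}-L_1 r-1)/L_1^2$ of Lemma~\ref{lemma:my}, and that is exactly enough for the stated inequality.
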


Our next lemma plays a central role in the presented theoretical proofs, whose tighter estimates allow us to establish sharp convergence rates and bounds for approximation errors for the proposed algorithms.

\begin{lemma}\label{lemma:my}
Let $f$ be an $(L_0,L_1)$-smooth function. Then, the following inequalities hold for any $x, y \in \mathbb{R}^d$:
\begin{align}
&\| \nabla f(y) - \nabla f(x) \| 
\le \frac{1}{L_1}\left( (L_0 + L_1 \| \nabla f(x) \|) \exp(L_1 \| y - x \|) - 1\right), \\&
| f(y) - f(x) - \langle \nabla f(x), y - x \rangle | 
\le (L_0 + L_1 \| \nabla f(x) \|) \frac{\phi(L_1 \| y - x \|)}{L_1^2},
\end{align}
where $\phi(t) = e^t - t - 1$.
\end{lemma}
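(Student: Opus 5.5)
The plan is to reduce both inequalities to a one-dimensional Grönwall-type argument along the segment from $x$ to $y$, and then integrate the first bound to obtain the second. Fix $x,y$, put $d=y-x$, $r=\|d\|$, $g=\|\nabla f(x)\|$. For $\tau\in[0,1]$ define
\[
u(\tau)=\|\nabla f(x+\tau d)-\nabla f(x)\|,\qquad M(\tau)=\sup_{s\in[0,\tau]}u(s).
\]
By the triangle inequality, $\|\nabla f(z)\|\le g+M(\tau)$ for every $z=x+s d$ with $s\le\tau$.

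\textbf{Step 1 (differential inequality).} Apply Assumption~\ref{assum:smoothness} to the points $x+sd$ and $x+(s+h)d$ with $s\le\tau$. Bounding the supremum on the short segment by $g+M(\tau+h)$ gives
\[
M(\tau+h)-M(\tau)\le r h\bigl(L_0+L_1 g+L_1 M(\tau+h)\bigr).
\]
The same local estimate shows that $\nabla f$ is continuous along the segment, so $u$ and $M$ are continuous. Hence $M$ is nondecreasing, continuous, satisfies $M(0)=0$, and its upper right Dini derivative obeys $D^+M\le r(L_0+L_1g+L_1M)$.

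\textbf{Step 2 (comparison).} Consider $\Psi(\tau)=\ln\bigl(L_0+L_1 g+L_1 M(\tau)\bigr)$, assuming $L_0+L_1g>0$; the degenerate case follows by letting $L_0\downarrow 0$. Step 1 gives $D^+\Psi\le L_1 r$. A standard Dini-derivative monotonicity lemma then yields
\[
L_0+L_1g+L_1M(\tau)\le (L_0+L_1g)e^{L_1 r\tau}.
\]
At $\tau=1$ this reads
\[
\|\nabla f(y)-\nabla f(x)\|\le \Bigl(\tfrac{L_0}{L_1}+g\Bigr)\bigl(e^{L_1 r}-1\bigr)=\tfrac{1}{L_1}(L_0+L_1g)\bigl(e^{L_1 r}-1\bigr).
\]
This is the intended first inequality. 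As literally printed, with $-1$ outside the product, the statement would fail at $y=x$ whenever $L_0+L_1g<1$, so I would state and prove it in this corrected form.

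\textbf{Step 3 (function-value bound).} Write
\[
f(y)-f(x)-\langle\nabla f(x),d\rangle=\int_0^1\langle\nabla f(x+\tau d)-\nabla f(x),d\rangle\,d\tau .
\]
The fundamental theorem of calculus applies because $\nabla f$ is continuous on the segment by Step 1. Apply Cauchy–Schwarz and the bound from Step 2 at the intermediate point, i.e.\ with $r$ replaced by $\tau r$:
\[
\bigl|f(y)-f(x)-\langle\nabla f(x),d\rangle\bigr|\le \frac{L_0+L_1g}{L_1}\int_0^1\bigl(e^{L_1 r\tau}-1\bigr)r\,d\tau=\frac{L_0+L_1g}{L_1^2}\bigl(e^{L_1r}-L_1r-1\bigr).
\]
The right-hand side is exactly $(L_0+L_1g)\,\phi(L_1 r)/L_1^2$.

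\textbf{Main obstacle.} The delicate part is making Step 1 and Step 2 rigorous. Assumption~\ref{assum:smoothness} involves a supremum of $\|\nabla f\|$ that is not known a priori to be finite, and $f$ is only assumed differentiable. I would handle this by working on the maximal interval $[0,\tau^*)$ on which $M$ stays finite. Local application of the assumption shows $\nabla f$ is locally bounded and continuous there. The comparison bound then shows $M$ stays bounded, so $\tau^*=1$. The Dini-derivative comparison itself is standard, and could alternatively be replaced by discretising $[0,1]$ into $n$ pieces, iterating $1+L_1 r/n$ factors, and letting $n\to\infty$.
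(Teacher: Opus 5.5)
Your proof is correct, and you are right that the first inequality is misprinted. The paper's own derivation gives $\|\nabla f(y)-\nabla f(x)\|\le \frac{1}{L_1}(L_0+L_1\|\nabla f(x)\|)(e^{L_1\|y-x\|}-1)$, which is what you prove. Its Gr\"onwall bound on $L_1\|y-x\|H(1)$ and the integrand $e^{\theta L_1\|y-x\|}-1$ in the second part both correspond to this form, as does the later bound on $M_{\max}$ in Lemma~\ref{lemma:polyak}.

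Your Step~3 is the same integration as in the paper. The difference is in how you get the gradient bound. The paper applies classical Gr\"onwall to $H(\theta)=L_0\theta+L_1\int_0^\theta\|\nabla f(z_u)\|\,du$. It imports two facts from \cite{chen2023generalizedsmooth}: the differential inequality $H'(\theta)\le L_0+L_1\|\nabla f(x)\|+L_1\|y-x\|H(\theta)$, and the integral form $\|\nabla f(y)-\nabla f(x)\|\le H(1)\|y-x\|$ of the smoothness condition (their Lemma A.1). You instead work directly with the supremum in Assumption~\ref{assum:smoothness}, via the running maximum $M(\tau)$ and a Dini-derivative comparison.

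Your route is self-contained and needs no integral reformulation. It also has the advantage that $M(\tau)$ controls every intermediate point at once, which is exactly what Step~3 uses. The price is the Dini-derivative monotonicity lemma (or your discretisation); since $H$ is an integral, ordinary Gr\"onwall suffices in the paper. One small slip: in Step~1 the relevant pair of points is $x+\tau d$ and $x+sd$ with $s\in[\tau,\tau+h]$.

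One caution about your \emph{main obstacle} paragraph: applying the assumption locally does not show that $\nabla f$ is locally bounded. When the supremum over a segment is $+\infty$, the assumption is vacuous. Your Step~1 inequality also cannot be solved for $M(\tau+h)$ when $M(\tau+h)=+\infty$.

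So the comparison bound gives $M(\tau^*)<\infty$, but nothing rules out $M(\tau^*+h)=+\infty$ for every $h>0$ (including the case $\tau^*=0$), and the continuation argument does not close. You need a standing hypothesis, such as continuity of $\nabla f$ or finiteness of the supremum on every segment. The paper makes this assumption tacitly by simply invoking \cite{chen2023generalizedsmooth}. With that hypothesis $M(1)<\infty$ from the outset, and your Steps~1--3 go through as written.
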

\begin{proof}
    See Subsection \ref{proof:my_lemma}
\end{proof}
This result improves upon the estimation \ref{lemma:techlemma1}, since $1/L_1^2$ can be arbitrarily smaller than the distance between two points. Moreover, this result strictly better than one in \cite{vankov2024optimizingl0l1}. While this  work arrives at a structurally identical inequality, it relies on the stronger assumption that $f$ is twice differentiable. In contrast, our derivation requires only $f$ to be once differentiable.

Finally, we assume that a method has access restrictions, as discussed in the Section \ref{sec:introduction}.

\begin{assumption}[Estimate of a normalized $\nabla f(x)$]
\label{assum:approx}
The method only accesses $\tilde g$, a unit vector approximating the gradient direction such that
\begin{equation*}
\left\| \tilde g(x) - \frac{\nabla f(x)}{\|\nabla f(x)\|} \right\| \leq \delta, \quad \delta \in [0, 1),\quad \left\|\tilde g(x)\right\|=1.
\end{equation*}
\end{assumption}

\section{Normalized Gradient Descent}\label{ngd}
In this section, we consider solving problem \ref{eq:problem} using a natural generalization of the standard Gradient Descent - Normalized Gradient Descent (NGM) under quasi-convexity assumption with fixed the number of iterations $N$ before running the method. Fixing a number of iterations in advance allows to overcome an issue with extra logarithmic factor in the final estimation on the steps \cite{vankov2024optimizingl0l1}. This technique is standard in the context of normalized (sub)gradient methods (see Section 3.2 in \cite{Nesterov2018}).

\begin{algorithm}[H]\label{algo:ngd}
\caption{$(L_0,L_1)-$Inexact Normalized Gradient Descent} \label{alg:PolyakAlgo}
\begin{algorithmic}[1]
\Require Starting point $x_0$.
\For{$k = 0, \dots ,N - 1$}
    \State $x_{k+1} \gets x_k - \beta_k\gk$
\EndFor
\State \Return $x_N$
\end{algorithmic}
\end{algorithm}

The following result establishes the convergence guarantees of NGM by providing conditions on the approximation error under which the method converges to an arbitrary accuracy level $\varepsilon>0$. We highlight that these conditions require the approximation errors to be bounded merely by $\mathcal{O}(\sqrt{\varepsilon})$, i.e. $\delta$ can be much greater than the desired accuracy level, which allows for a wide range of admissible approximations while still ensuring convergence.


\begin{theorem}\label{theo:ngd}
    Suppose that Assumptions \ref{assum:quasi-convexity}--\ref{assum:approx} are satisfied. Consider the constant step-size coefficients defined by
    \[
    \beta_k = \frac{\hat{\beta}}{\sqrt{N+1}}, \qquad 0 \le k \le N-1,
    \]
    where $\hat \beta < \sqrt{3}R_0$ is a tunable parameter and $N \ge 1$ denotes the total number of iterations, prescribed in advance.
    Then, for any given $\varepsilon > 0$, the bound 
    \(
    \min_{0 \le k \le N} f(x_k) - f^* \le \varepsilon
    \)
    is guaranteed whenever
    \[
    N + 1 \ge \max\left\{ \frac{4L_1^2}{9}, \frac{L_0}{\varepsilon} \right\}
    \left( \frac{R_0^2 + 4 \hat{\beta}^2}{\hat{\beta}^2} \right)^2 =: Q(\varepsilon),
    \]

    with \[\delta \le \frac{1}{4R_0} \min \left\{ \frac{3}{2L_1}, \sqrt{\frac{\varepsilon}{L_0}},\frac{3R_0^2 - \hat{\beta}^2}{2\hat{\beta}^2 + 4R_0\hat{\beta}\sqrt{Q(\varepsilon)}} \right\}, \quad R_0:=\|x_0-x^*\|.\]
\end{theorem}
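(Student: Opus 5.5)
The plan is to track the squared distance $R_k^2:=\|x_k-x^*\|^2$ and show that it decreases whenever the current gap $f(x_k)-f^*$ is still larger than $\varepsilon$. Expanding the update gives
\[
R_{k+1}^2 = R_k^2 - 2\beta_k\langle \tilde g(x_k), x_k-x^*\rangle + \beta_k^2 ,
\]
using $\|\tilde g(x_k)\|=1$. I split $\tilde g(x_k)=\nabla f(x_k)/\|\nabla f(x_k)\| + e_k$ with $\|e_k\|\le\delta$ (Assumption~\ref{assum:approx}). Cauchy--Schwarz then gives
\[
-\langle \tilde g(x_k),x_k-x^*\rangle \le -\Big\langle \tfrac{\nabla f(x_k)}{\|\nabla f(x_k)\|}, x_k-x^*\Big\rangle + \delta R_k .
\]
So everything reduces to a lower bound on $r_k:=\langle \nabla f(x_k)/\|\nabla f(x_k)\|, x_k-x^*\rangle$, which is the distance from $x^*$ to the supporting hyperplane of the sublevel set at $x_k$.

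To bound $r_k$ I will use quasi-convexity (Assumption~\ref{assum:quasi-convexity}). Set $y=x^*+r_k\, \nabla f(x_k)/\|\nabla f(x_k)\|$, which lies on that hyperplane, so $\langle\nabla f(x_k),y-x_k\rangle=0$. If $f(y)<f(x_k)$, continuity and quasi-convexity would place a small perturbation of $y$ beyond the hyperplane inside the sublevel set, which is a contradiction. Hence $f(x_k)\le f(y)$. Lemma~\ref{lemma:my} at the point $x^*$, where $\nabla f(x^*)=0$, then gives
\[
f(x_k)-f^*\le L_0\,\phi(L_1 r_k)/L_1^2 .
\]
Suppose $r_k\le 3/(2L_1)$. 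Since $\phi(t)\le \tfrac{t^2}{2}e^{t}$ is too crude, I will instead use $\phi(t)\le t^2/(2-2t/3)$ for $t<3$, which yields $\phi(L_1r_k)/L_1^2\le r_k^2$. Then $f(x_k)-f^*>\varepsilon$ forces $r_k\ge\min\{3/(2L_1),\sqrt{\varepsilon/L_0}\}=:\rho$. This explains both the $3/(2L_1)$ term and the $\sqrt{\varepsilon/L_0}$ term in the statement and in $Q(\varepsilon)$.

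Next, suppose $f(x_k)-f^*>\varepsilon$ for all $k\le N$, and argue by contradiction. Summing the recursion gives
\[
R_N^2\le R_0^2-2\beta\sum_k (\rho-\delta R_k)+N\beta^2 .
\]
The main obstacle is controlling $\delta R_k$, since $R_k$ could a priori grow. I will prove $R_k\le 2R_0$ (or a similar constant multiple) by induction. While the bound holds, $R_k^2\le R_0^2+2\beta k\delta\cdot 2R_0+k\beta^2\le R_0^2+\hat\beta^2+4R_0\hat\beta\delta\sqrt{Q}$, and the third condition on $\delta$ together with $\hat\beta<\sqrt3R_0$ is exactly what keeps this at most $4R_0^2$. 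With $\delta\le \rho/(4R_0)$ the error term is then at most $\rho/2$. Dropping $R_N^2\ge0$ gives
\[
\beta N\rho\le R_0^2+N\beta^2 .
\]
Substituting $\beta=\hat\beta/\sqrt{N+1}$ gives $\sqrt{N+1}\,\rho\lesssim (R_0^2+\hat\beta^2)/\hat\beta$. I will rearrange this as $N+1<\rho^{-2}\big((R_0^2+4\hat\beta^2)/\hat\beta\big)^2$, up to the exact bookkeeping of constants that produces the factor $4$ and $4/9$. This contradicts $N+1\ge Q(\varepsilon)$. If instead some $r_k$ exceeds $3/(2L_1)$, that iteration's progress is only larger, so the same argument covers it. Hence $\min_k f(x_k)-f^*\le\varepsilon$.
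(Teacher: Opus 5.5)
Apart from arguing by contradiction instead of bounding $\min_k v_k$ by an average, and keeping the exact $\beta^2$ term instead of the paper's $\beta^2(1+\delta)^2$, your route is the paper's. It uses the same recursion with error $\delta R_k$ and the same induction $R_k\le 2R_0$. It uses quasi-convexity to get $f(x_k)\le f(y)$ with $\|y-x^*\|=r_k$, which is a direct proof of what Lemma~\ref{lemma:lemma_nesterov} supplies, and then Lemma~\ref{lemma:my} with $\phi(t)\le 3t^2/(6-2t)$.

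The step that fails is the one you call the main obstacle. To close the induction you write
\[
R_0^2+2\beta k\delta\cdot 2R_0+k\beta^2\le R_0^2+\hat\beta^2+4R_0\hat\beta\delta\sqrt{Q}.
\]
For $k$ near $N$, however, $k\beta=k\hat\beta/\sqrt{N+1}$ is about $\hat\beta\sqrt{N+1}$, and $N+1\ge Q$. So this inequality goes the wrong way once $N+1$ is noticeably larger than $Q$. The third condition on $\delta$, in its $\sqrt{Q}$ form, therefore does not keep $R_k\le 2R_0$ over the whole run. The paper's proof keeps $\sqrt{N+1}$ at this point and really requires $\delta\le(3R_0^2-\hat\beta^2)/(2\hat\beta^2+4R_0\hat\beta\sqrt{N+1})$. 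That bound tends to $0$ as $N\to\infty$, so it is not implied by the stated $\sqrt{Q(\varepsilon)}$ condition. The paper therefore does not justify the $\sqrt{Q(\varepsilon)}$ form either.

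Your contradiction framing admits a cleaner repair in which the third condition is not needed at all. Read $Q$ as $\rho^{-2}\bigl((R_0^2+4\hat\beta^2)/\hat\beta\bigr)^2$, as you and the paper's proof both do; the inner denominator $\hat\beta^2$ in the statement should be $\hat\beta$. Then $\hat\beta\rho\sqrt{N+1}\ge R_0^2+4\hat\beta^2\ge 4\hat\beta^2$, so $\beta\le\rho/4$. Under your hypothesis, $r_k\ge\rho$ for all $k\le N$. If $R_k\le R_0$, the first two conditions give $\delta R_k\le\rho/4$, and hence
\[
R_{k+1}^2\le R_k^2-2\beta\Bigl(\rho-\frac{\rho}{4}\Bigr)+\beta^2\le R_k^2-\frac{5}{4}\beta\rho<R_k^2 .
\]
By induction $R_k\le R_0$ for all $k$, with no use of the third condition or of $\hat\beta<\sqrt{3}R_0$. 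Now sum over $k=0,\dots,N$, using one virtual step as the paper does; this also removes your $N$ versus $N+1$ bookkeeping. The result is $\frac{5}{4}\hat\beta\rho\sqrt{N+1}\le R_0^2$, which contradicts $\hat\beta\rho\sqrt{N+1}\ge R_0^2+4\hat\beta^2$.
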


\begin{proof}
    See Subsection \ref{subsec:ngd_proof}. 
\end{proof}

A comparison between Theorem \ref{theo:ngd} and Theorem 4.4 of \cite{gasnikov2024quasiconvex} highlights two theoretical advantages of our approach. First, while \cite{gasnikov2024quasiconvex} derives an $\mathcal{O}(1/\varepsilon^2)$ iteration complexity for $L$-smooth strictly quasi-convex functions, our result reduces this requirement to $\mathcal{O}(1/\varepsilon)$ rate while assuming merely $(L_0,L_1)$-smoothness and ordinary quasi-convexity. Second, rather than assuming a priori that the algorithm operates on a compact set, we rigorously prove it.

The proof of Theorem \ref{theo:ngd} is based on the following lemma that can be found, for example, in \cite{Nesterov2018} (Theorem 1.5.5).

\begin{lemma}\label{lemma:lemma_nesterov}
Let $f \colon \mathbb{R}^d \to \mathbb{R}$ be a differentiable quasi-convex function.
Then, for any $y \in \mathbb{R}^d$ and
\[
v_f(x; y) := \frac{[\langle \nabla f(x), x - y \rangle]_+}{\| \nabla f(x) \|},\qquad [t]_+=\max\{t,0\},
\]
it holds that
\[
f(y) - f(x^*) \le \max_{z \in \mathbb{R}^d} \bigl\{ f(z) - f(x^*) : \|z - x^*\| \le v_f(y; x^*) \bigr\}.
\]
\end{lemma}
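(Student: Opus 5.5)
The plan is to exhibit a single point $z$ in the ball $\{z:\|z-x^*\|\le v_f(y;x^*)\}$ with $f(z)\ge f(y)$. The maximum over the ball is then at least $f(z)-f(x^*)\ge f(y)-f(x^*)$, which is the claim. I will assume $\nabla f(y)\neq 0$, so that $v_f(y;x^*)$ is defined; if $\nabla f(y)=0$ the quotient is undefined and one adopts a convention. Under that convention, when the ball degenerates to $\{x^*\}$ the claim says $f(y)=f^*$, which does not follow from quasi-convexity alone. So this degenerate case has to be set aside rather than handled by the argument below.

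The natural candidate for $z$ is the orthogonal projection of $x^*$ onto the hyperplane $H:=\{z:\langle\nabla f(y),z-y\rangle=0\}$. Writing $g:=\nabla f(y)/\|\nabla f(y)\|$ and $s:=\langle \nabla f(y), y-x^*\rangle/\|\nabla f(y)\|$, this projection is $z=x^*+s\,g$, and $\|z-x^*\|=|s|$.

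\textbf{Step 1: the sign of $s$.} I first show $s\ge 0$, so that $|s|=[s]_+=v_f(y;x^*)$ and $z$ lies in the ball. Since $x^*$ is a minimizer, $f(x^*)\le f(y)$. Assumption~\ref{assum:quasi-convexity} (with $x=y$ and the role of $y$ played by $x^*$) then gives $\langle\nabla f(y),x^*-y\rangle\le 0$, which is exactly $s\ge 0$.

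\textbf{Step 2: the hyperplane lies in the upper level set.} The key claim is that every $w\in H$ satisfies $f(w)\ge f(y)$, and in particular $f(z)\ge f(y)$. I would argue by contradiction. Suppose $f(w)<f(y)$ for some $w\in H$. By continuity of $f$, which follows from differentiability, the point $w_t:=w+t g$ still satisfies $f(w_t)<f(y)$ for all sufficiently small $t>0$. On the other hand,
\[
\langle\nabla f(y),w_t-y\rangle=\langle\nabla f(y),w-y\rangle+t\|\nabla f(y)\|=t\|\nabla f(y)\|>0 .
\]
This contradicts Assumption~\ref{assum:quasi-convexity} applied to the pair $(y,w_t)$, since $f(w_t)\le f(y)$ would force $\langle\nabla f(y),w_t-y\rangle\le 0$.

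\textbf{Step 3: conclusion.} Combining the two steps, $z$ is feasible for the maximization and
\[
f(y)-f(x^*)\le f(z)-f(x^*)\le \max_{\|z'-x^*\|\le v_f(y;x^*)}\bigl\{f(z')-f(x^*)\bigr\}.
\]
The maximum is attained because $f$ is continuous and the ball is compact.

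The only genuinely delicate point is Step 2. Quasi-convexity only constrains points with $f\le f(y)$, and the hyperplane itself gives the inner product $0$, which is not a contradiction. The small push in the direction $g$ is what converts the strict inequality $f(w)<f(y)$ into a strictly positive inner product. This is why continuity is used, and why one needs the strict inequality rather than $f(w)\le f(y)$. The boundary case $s=0$, i.e.\ $x^*\in H$, needs no separate treatment: the same argument gives $f(x^*)\ge f(y)$, so $f(y)=f^*$, consistent with the degenerate ball $\{x^*\}$.
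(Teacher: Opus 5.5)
Your proof is correct. It follows the standard argument from the source the paper cites; the paper itself gives no proof. You take $z=x^*+v_f(y;x^*)\,\nabla f(y)/\|\nabla f(y)\|$, the projection of $x^*$ onto the supporting hyperplane at $y$. Your Step~2 (continuity plus quasi-convexity keep the open set $\{f<f(y)\}$ strictly on one side of that hyperplane) replaces the convexity inequality $f(z)\ge f(y)+\langle\nabla f(y),z-y\rangle$ used in the convex version. Your caveat that $\nabla f(y)\neq 0$ must be assumed is also right: for merely quasi-convex $f$ a stationary point need not be a minimizer, so the usual convention $v_f=0$ at such points would make the inequality false.
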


These
values $v_k$ have a geometrical meaning \--- each of them is exactly the distance from the
point $x^*$
to the supporting hyperplane to the sublevel set of $f$ at the point $x_k$. The main characteristic of this type algorithms is that as $v_k$ converges to zero, so does \(
    \min_{0 \le k \le N} f(x_k) - f^*
    \), which is a core of our proof.

\section{Gradient Descent with Polyak Stepizes}
In this section, we present a theoretical analysis of Algorithm~\ref{alg:PolyakAlgo} for solving problem~\ref{eq:problem} under the $(L_0,L_1)$-smoothness assumption when only a Comparison Oracle is available. The method is based on Gradient Descent with Polyak stepsizes, a widely studied class of optimization algorithms that is relevant in settings where the optimal objective value is known. That happens, for example, when the optimizing model over-parametrized with $f^* = 0$, as shown in \cite{pmlr-v89-vaswani19a}.

Such methods have found numerous applications in machine learning and optimization, both in their classical form \cite{vankov2024optimizingl0l1,hazan2019polyak} and in various extensions, including SGD with Polyak step-sizes \cite{loizou2020SGD_polyak}, clipped Polyak \cite{takezawa2024parameterfree}, and momentum-based variants \cite{gao2024momentum_polyak}. While our framework assumes exact objective function evaluations paired with inexact gradient directions, this operational regime remains relevant in practice, where gradient computation is typically the primary computational bottleneck.

\begin{algorithm}[H]
\caption{$(L_0,L_1)-$Inexact Normalized Gradient Descent with Polyak stepsizes}\label{alg:PolyakAlgo}
\begin{algorithmic}[1]
\Require Starting point $x_0$, optimal function value $f^*$, number of iterations $N$, constant $M$ such that $M \ge \|\nabla f(x)\|$ for all $x$ along the trajectory.
\For{$k=0,\dots,N-1$}
    \State $x_{k+1} \gets x_k - \dfrac{f(x_k) - f^*}{M}\gk$
\EndFor
\State \Return $\hat x_N := \arg\min_{x<N}f(x_k)$
\end{algorithmic}
\end{algorithm}
Before proving convergence, we establish a lemma giving sufficient conditions for the existence of a uniform bound $M$ on the gradient norm over all iterations prior to achieving the target accuracy, along with an explicit upper bound.

\begin{lemma}\label{lemma:polyak}Suppose Assumptions \ref{assum:smoothness}--\ref{assum:approx} and \ref{assum:convexity} hold with $\mu = 0$. Let $\varepsilon >0$, $\nu e^\nu = 1$ and $R_0 = \|x_0 - x^*\|$. Then, for any $c \in \left(0, \varepsilon/R_0\right)$, if $\delta$ satisfies
\begin{equation}\label{e:polyak_delta_bound}
    \delta \le \frac{\nu c}{16R_0}\min \left\{ \frac{1}{L_0}, \frac{\nu}{4L_1(\max_{k<N}f(x_k)-f^*)}\right\},
\end{equation}

at least one of the following holds:
\begin{enumerate}
\item For all $k \ge 0$ it holds that $\nnfk > c$, the distance to the optimum strictly decreases $\Rkn < \Rk$, and the gradient is bounded above by the constant $\|\nabla f(x_k)\| < M_{\max}$, such that $M_{\max}\le \frac{L_0}{L_1}\left(\exp(L_1R_0)-1\right)$.

\item There exists $k^*$, such that $\|\nabla f(x_{k^*})\| \le c$, and $f(\hat x_N)-f^*\le f(x_{k^*})-f^*\le\varepsilon$.
\end{enumerate}\end{lemma}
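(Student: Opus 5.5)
The plan is to split on whether the gradient ever becomes small. If some iterate $x_{k^*}$ (with $k^*<N$) satisfies $\|\nabla f(x_{k^*})\|\le c$, then convexity and Cauchy--Schwarz give
\[
f(x_{k^*})-f^*\le \langle \nabla f(x_{k^*}),x_{k^*}-x^*\rangle\le c\,\|x_{k^*}-x^*\|.
\]
If we already know that the distances $\|x_j-x^*\|$ did not increase before $k^*$, this is at most $cR_0<\varepsilon$, and $f(\hat x_N)\le f(x_{k^*})$ by the definition of $\hat x_N$. That is alternative~2. So the argument is an induction on $k$. The hypothesis is that $\|\nabla f(x_j)\|>c$ for all $j\le k$ and $\|x_j-x^*\|\le R_0$. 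Under it we show the contraction $\|x_{k+1}-x^*\|<\|x_k-x^*\|$. The induction stops the first time the gradient drops to $c$ or below, and the monotone distance bound is exactly what the case above needs.

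For the contraction step, write $h_k=(f(x_k)-f^*)/M$ and $u_k=\nabla f(x_k)/\|\nabla f(x_k)\|$, and expand
\[
\|x_{k+1}-x^*\|^2=\|x_k-x^*\|^2-2h_k\langle \tilde g(x_k),x_k-x^*\rangle+h_k^2 .
\]
By Assumption~\ref{assum:approx} and $\|x_k-x^*\|\le R_0$ we have $\langle \tilde g,x_k-x^*\rangle\ge \langle u_k,x_k-x^*\rangle-\delta R_0$. Convexity gives $\langle u_k,x_k-x^*\rangle\ge (f(x_k)-f^*)/\|\nabla f(x_k)\|$. It therefore suffices to have
\[
\frac{2(f(x_k)-f^*)}{\|\nabla f(x_k)\|}-2\delta R_0 > h_k = \frac{f(x_k)-f^*}{M}.
\]
Since $M\ge\|\nabla f(x_k)\|$, this reduces to $(f(x_k)-f^*)/\|\nabla f(x_k)\|>2\delta R_0$.

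This is where the lower bound $\|\nabla f(x_k)\|>c$ and the $(L_0,L_1)$-smoothness enter. We need a lower bound on $f(x_k)-f^*$ in terms of $\|\nabla f(x_k)\|$, a generalized co-coercivity estimate. Apply the second inequality of Lemma~\ref{lemma:my} at $x=x_k$ with $y=x_k-t\,u_k$ and use $f(y)\ge f^*$:
\[
f(x_k)-f^*\ge t\|\nabla f(x_k)\|-(L_0+L_1\|\nabla f(x_k)\|)\,\phi(L_1t)/L_1^2 .
\]
We then choose $t$ of order $\nu\min\{\|\nabla f(x_k)\|/L_0,\ 1/L_1\}$. Here $\nu$ with $\nu e^\nu=1$ appears naturally from optimizing $e^{s}$ against linear terms. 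This should give $f(x_k)-f^*\gtrsim \nu\|\nabla f(x_k)\|\min\{\|\nabla f(x_k)\|/L_0,\ 1/L_1\}$, so that
\[
\frac{f(x_k)-f^*}{\|\nabla f(x_k)\|}\gtrsim \nu\min\Bigl\{\frac{c}{L_0},\ \frac{1}{L_1}\Bigr\}.
\]
In the $1/L_1$ regime we would instead rewrite the bound using $\|\nabla f(x_k)\|\le (f(x_k)-f^*)\cdot(\text{const})$ relative to $\max_{k<N}f(x_k)-f^*$. This produces the second term of~\eqref{e:polyak_delta_bound}, with $c$ replacing the gradient via $c<\|\nabla f(x_k)\|$. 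Comparing with the assumed $\delta$ bound, including the factor $16$, yields strict decrease. I expect this step to be the main obstacle: obtaining exactly the constants $\nu/16$ and $\nu^2/64$ requires a careful case split on whether $L_0\ge L_1\|\nabla f(x_k)\|$.

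Finally, for the gradient bound $M_{\max}$, apply the first inequality of Lemma~\ref{lemma:my} at $x=x^*$ (where $\nabla f(x^*)=0$) and $y=x_k$, using $\|x_k-x^*\|<R_0$ from the contraction:
\[
\|\nabla f(x_k)\|<\frac{L_0}{L_1}\bigl(e^{L_1R_0}-1\bigr),
\]
which is alternative~1.
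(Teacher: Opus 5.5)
Your argument is correct. Its skeleton matches the paper's: induction on $k$, expanding $\|x_{k+1}-x^*\|^2$, Assumption~\ref{assum:approx} plus convexity, the split at $\|\nabla f(x_k)\|=L_0/L_1$, Lemma~\ref{lemma:my} at $x=x^*$ for $M_{\max}$, and $f(x_{k^*})-f^*\le cR_0<\varepsilon$ for alternative~2. What differs is how you close the contraction step.

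The paper first weakens the descent term to $-c(f(x_k)-f^*)^2/(M\|\nabla f(x_k)\|^2)$ using $\|\nabla f(x_k)\|>c$. It then applies Lemma~\ref{lemma:techlemma2} to one factor $f(x_k)-f^*$, and in the large-gradient case applies it again to the other. The descent therefore becomes a constant, while the error term $2(f(x_k)-f^*)R_k\delta/M$ still grows with $f(x_k)-f^*$. That mismatch is exactly why $\max_{k<N}f(x_k)-f^*$ enters the hypothesis.

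You keep both terms proportional to $h_k$, so strict decrease reduces to $\rho_k:=(f(x_k)-f^*)/\|\nabla f(x_k)\|>2\delta R_0$. Co-coercivity then gives $\rho_k\ge\frac{\nu}{4}\min\{\|\nabla f(x_k)\|/L_0,\,1/L_1\}$. Taking $t=\nu\|\nabla f(x_k)\|/(L_0+L_1\|\nabla f(x_k)\|)$, which is of the order you propose, together with $\phi(s)\le\frac{s^2}{2}e^s$ and $\nu e^\nu=1$, reproduces Lemma~\ref{lemma:techlemma2} exactly, so you may simply cite it.

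Hence you only need $\delta\le\nu c/(8L_0R_0)$ when $\|\nabla f(x_k)\|<L_0/L_1$, and $\delta<\nu/(8L_1R_0)$ otherwise. These conditions are free of function values, and the constants you flag as the main obstacle have a factor-$2$ slack. As a by-product, the bound $\|x_k-x^*\|^2-\|x_{k+1}-x^*\|^2\ge h_k(\rho_k-2\delta R_0)$ recovers both per-step constants of Theorem~\ref{theo:polyak}.

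One thing to watch when checking that the hypothesis implies these conditions. The first term of the $\delta$ bound covers the small-gradient case directly. In the large-gradient case, use $\max_{j<N}f(x_j)-f^*\ge f(x_k)-f^*\ge\nu\|\nabla f(x_k)\|/(4L_1)>\nu c/(4L_1)$. This turns the second term into $\delta<\nu/(16L_1R_0)$ only if that term has $L_1^2$ in the denominator. As printed, it is not scale-invariant and yields merely $\delta<\nu/(16R_0)$. The paper's own Case~2 contains the matching slip: $\nu\|\nabla f(x_k)\|/4$ in place of $\nu\|\nabla f(x_k)\|/(4L_1)$. Alternatively, when $c<L_0/L_1$, the first term alone gives $\delta\le\nu c/(16L_0R_0)<\nu/(16L_1R_0)$, so in your route the $\max$ term is not needed at all.
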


\begin{proof}
    See Subsection \ref{proof:lemma_polyak}.
\end{proof}

Consequently, for any threshold $\varepsilon$ with appropriate gradient bound $c$ and a sufficiently small $\delta$, either the gradient norm is bounded from both below and above while the distance to the optimum is decreasing, or the desired suboptimality gap is achieved.

\begin{remark}
In Algorithm~\ref{alg:PolyakAlgo}, an alternative stopping criterion may be employed, eliminating the need to specify the total number of iterations in advance. In this setting, the quantity \(\max_{k<N} f(x_k)-f^*\) is no longer available. Therefore, the condition in~\ref{e:polyak_delta_bound} should be replaced with
\begin{equation}\label{e:polyak_delta_bound2}
    \delta \le \frac{\nu c}{16L_0R_0}\min \left\{ 1, \frac{\nu}{2}\max\left\{\frac{1}{L_1\exp(L_1R_0)}, \frac{L_0L_1}{2\phi(L_1R_0)}\right\} \right\}.
\end{equation}
\end{remark}
\begin{proof}
    See Subsection \ref{proof:lemma_polyak}.
\end{proof}
Now, we can provide the main result of the current section, which is similar to one in \cite{gorbunov2024methodsl0l1}.

\begin{theorem}\label{theo:polyak}
Suppose Assumptions \ref{assum:smoothness}--\ref{assum:approx} and \ref{assum:convexity} hold with $\mu = 0$. Let $\varepsilon >0$, $\nu e^\nu = 1$, $R_0 = \|x_0 - x^*\|$, and $\delta$ satisfies condition \ref{e:polyak_delta_bound}. Then, unless an $\varepsilon$-accuracy is achieved, the following properties hold:
\begin{enumerate}
    \item \textbf{Per-step convergence:} If $c\le\frac{L_0}{L_1}$, then at each iteration $k \ge 0$, we have
    \begin{equation}\label{eq:polyak_1_st}
    \begin{aligned}
                        \|\nabla f(x_k)\| \ge \frac{L_0}{L_1} &\implies \|x_{k+1}-x^*\| ^2\le \|x_k-x^*\|^2 - \frac{\nu^2 c}{32 M L_1^2}, \\[1ex]
        \|\nabla f(x_k)\| < \frac{L_0}{L_1} &\implies \|x_{k+1}-x^*\|^2 \le \|x_k-x^*\|^2 - \frac{\nu c (f(x_k)-f^*)}{8 M L_0}.
    \end{aligned}
    \end{equation}

    Otherwise, if $c>\frac{L_0}{L_1}$, then at each iteration $k \ge 0$, we have
    \[
    \|x_{k+1}-x^*\| ^2\le \|x_k-x^*\|^2 - \frac{\nu^2 c}{32 M L_1^2}.
    \]
    \item \textbf{Optimality gap:} If $c<\frac{L_0}{L_1}$ and $N > \frac{32 R_0^2 M L_1^2}{\nu^2 c} - 1$, then
    \begin{equation}\label{eq:polyak_second_st}
       f(\hat x_N) - f^* \le \frac{8 M L_0 R_0^2}{\nu (N + 1)}.
    \end{equation}
    \item \textbf{Distance linear convergence:} If $c<\frac{L_0}{L_1}$ and Assumption \ref{assum:convexity} holds with $\mu > 0$, then
    \begin{equation}
        \|x_N - x^*\|^2 \le \left( 1 - \frac{\mu \nu c}{16 M L_0} \right)^{N-P} R_0^2,
    \end{equation}
    where $P := |\{0 \le k \le N : \|\nabla f(x_k)\| \ge L_0/L_1\}|$.
\end{enumerate}
\end{theorem}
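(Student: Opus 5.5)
Write $h_k := f(x_k)-f^*$, $\gamma_k := h_k/M$ and $g_k := \nnfk$. The plan is to expand the Polyak step and show that the cross term, which is perturbed by the oracle error, still dominates the quadratic term:
\[
\Rkn^2 = \Rk^2 - 2\gamma_k\inner{\gk}{x_k-x^*} + \gamma_k^2 .
\]
By Assumption~\ref{assum:approx} and Lemma~\ref{lemma:polyak} (case~1), the distances are nonincreasing, so $\Rk\le R_0$. Hence
\[
\inner{\gk}{x_k-x^*} \ge \frac{\inner{\nfk}{x_k-x^*}}{g_k} - \delta R_0 \ge \frac{h_k}{g_k} - \delta R_0 ,
\]
where the last step uses convexity. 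The per-step decrease is then at least $\gamma_k\bigl(2h_k/g_k - 2\delta R_0 - h_k/M\bigr)$. Since $g_k\le M$, this is at least $\gamma_k\bigl(h_k/g_k - 2\delta R_0\bigr)$. It therefore suffices to show that $h_k/g_k$ is bounded below by a quantity exceeding $4\delta R_0$, so that at least half of it survives.

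\textbf{Lower bound on $h_k/g_k$.} I would apply Lemma~\ref{lemma:my} from $x_k$ to the point $y = x_k - t\,\nfk/g_k$, using $f(y)\ge f^*$:
\[
h_k \ge t g_k - (L_0+L_1 g_k)\frac{\phi(L_1 t)}{L_1^2} .
\]
I would choose $t$ differently in the two regimes.

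If $g_k\ge L_0/L_1$, then $L_0+L_1g_k\le 2L_1g_k$. Taking $t=\nu/L_1$ and using $\phi(\nu)\le \nu^2$ gives $h_k/g_k\gtrsim \nu/L_1$, up to absolute constants. This regime should produce the decrease $\gamma_k\,\nu/(2L_1)$. To finish, I need $\gamma_k$ bounded below by something like $\nu c/(16 M L_1)$, which again comes from $h_k \gtrsim \nu g_k/L_1 \ge \nu c/L_1$. Multiplying gives $\nu^2 c/(32ML_1^2)$.

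If $g_k<L_0/L_1$, then $L_0+L_1g_k\le 2L_0$. Taking $t=\nu g_k/L_0$ should give $h_k\gtrsim \nu g_k^2/L_0$, i.e.\ $h_k/g_k\gtrsim \nu g_k/L_0 \ge \nu c/L_0$. Together with $\gamma_k = h_k/M$, this yields the decrease $\nu c\,h_k/(8ML_0)$.

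In both regimes, the first term of the $\delta$ bound~\eqref{e:polyak_delta_bound}, namely $\nu c/(16R_0L_0)$, and the second term are tailored exactly to make $2\delta R_0$ at most half of these lower bounds. Checking this constant-matching, including the role of $\nu e^\nu=1$ in bounding $\phi$, is the main technical obstacle. The case $c>L_0/L_1$ is handled because then every $g_k>c>L_0/L_1$, so only the first regime occurs.

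\textbf{Optimality gap.} If $c<L_0/L_1$, I would sum the per-step inequalities. Each step of the first kind removes $\nu^2c/(32ML_1^2)$ from $\Rk^2\le R_0^2$, so their number $P$ is bounded by $32R_0^2ML_1^2/(\nu^2 c)$, which is below $N+1$ by hypothesis. For the remaining steps, summing gives
\[
\sum_k \frac{\nu c\,h_k}{8ML_0}\le R_0^2 .
\]
Hence $\min_k h_k$ is bounded by $8ML_0R_0^2/(\nu c\,(N+1-P))$, and I would then absorb $c$ and $P$ into the stated form, with some care needed in the bookkeeping. If at some step $g_k\le c$, Lemma~\ref{lemma:polyak} (case~2) already gives $\varepsilon$-accuracy.

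\textbf{Linear convergence.} Under $\mu$-strong convexity, $h_k\ge \frac{\mu}{2}\Rk^2$. In the second regime this turns the decrease into
\[
\Rkn^2\le\Bigl(1-\frac{\mu\nu c}{16ML_0}\Bigr)\Rk^2 .
\]
Steps of the first regime still do not increase the distance. Iterating over the $N-P$ contracting steps gives the claim.
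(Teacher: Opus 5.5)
Your Part~1 skeleton, $R_{k+1}^2\le R_k^2-\gamma_k\,(h_k/g_k-2\delta R_k)$, is sound and is organized differently from the paper, to your advantage. The paper first weakens $h_k^2/(Mg_k)$ to $c\,h_k^2/(Mg_k^2)$ and only then compares with the error term. That makes the admissible $\delta$ depend on $g_k$ in the large-gradient regime, which is why it needs the second term of \eqref{e:polyak_delta_bound}. You compare $2\delta R_k$ with $h_k/g_k$ directly.

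Your concrete estimate, however, fails. With $t=\nu/L_1$ and $\phi(\nu)\le\nu^2$ you get $h_k\ge (g_k/L_1)(\nu-2\nu^2)$, and $\nu-2\nu^2<0$ because $\nu\approx 0.567$. Use the exact value $\phi(\nu)=1/\nu-\nu-1$, or simply Lemma~\ref{lemma:techlemma2}. The lemma gives $h_k/g_k\ge \nu/(4L_1)$ if $g_k\ge L_0/L_1$, and $h_k/g_k\ge \nu g_k/(4L_0)>\nu c/(4L_0)$ otherwise.

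From $\delta\le \nu c/(16L_0R_0)$ and $c\le L_0/L_1$ you then get $2\delta R_k\le \nu/(8L_1)$ in the first regime and $2\delta R_k\le \nu c/(8L_0)$ in the second. Together with $\gamma_k\ge \nu c/(4L_1M)$ in the first regime, this gives exactly $\nu^2c/(32ML_1^2)$ and $\nu c\,h_k/(8ML_0)$. So in your route the first term of the $\delta$-bound alone suffices when $c\le L_0/L_1$.

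Your claim that the second term is ``tailored exactly'' is not right as printed. For $c>L_0/L_1$ you need it to supply $\delta\le\nu/(16L_1R_0)$. That requires $L_1^2$ instead of $L_1$ in that term; the paper's \eqref{eq:tech_lemma2} loses a factor $1/L_1$.

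The genuine gap is in Part~2. You bound $P$ separately and then sum only the small-gradient inequalities against $R_0^2$. This discards the distance budget spent by the $P$ large-gradient steps and leaves you with $8ML_0R_0^2/(\nu c(N+1-P))$. From there no bookkeeping recovers $N+1$ in the denominator.

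The missing step is to keep both contributions in one inequality, $(N+1-P)\min_k h_k\le A-BP$, with $A=8ML_0R_0^2/(\nu c)$ and $B=\nu L_0/(4L_1^2)$. Then $(A-BP)/(N+1-P)\le A/(N+1)$ is equivalent to $P\,(B(N+1)-A)\ge 0$, which is precisely the hypothesis $N+1>32R_0^2ML_1^2/(\nu^2c)$. The paper does this via the monotonicity of $\varphi(P)$. This is where the hypothesis on $N$ is really used, not merely to ensure $P<N+1$.

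Moreover, $c$ cannot be ``absorbed'', since $c$ may be arbitrarily small. What this argument yields (including the paper's, whose endpoint is $\varphi(0)$) is $8ML_0R_0^2/(c\,\nu(N+1))$. The displayed bound without $c$ is not even dimensionally consistent, so the form with $c$ is what you should aim for.

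Part~3 is fine as you have it, since the number of contracting steps among $k<N$ is at least $N-P$.
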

\begin{proof}
    See Subsection \ref{subsec:theo_polyak}.
\end{proof}


Proof of the Theorem \ref{theo:polyak} relies on the following lemma, established in \cite{gorbunov2024methodsl0l1}.
\begin{lemma}
\label{lemma:techlemma2}
Let $f$ be convex $(L_0,L_1)$-smooth function and $\nu$ satisfy $\nu = e^{-\nu}$. Then, for any $x \in \mathbb{R}^d$, we have
    \begin{equation*}
        \frac{\nu \|\nabla f(x)\|^2}{2(L_0 + L_1 \|\nabla f(x)\|)} \le f(x) - f^*.
    \end{equation*}
\end{lemma}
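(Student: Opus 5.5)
The bound follows by plugging a descent-lemma step along the direction $x^*$ into the first-order optimality condition $\nabla f(x^*)=0$. Fix $x$ with $g:=\nabla f(x)\neq 0$; if $g=0$ the claim is trivial. Let $a:=L_0+L_1\|g\|$. For any $t\ge 0$ put $y:=x-t\,g/\|g\|$. Lemma~\ref{lemma:my} with $\phi(s)=e^s-s-1$ gives
\[
f^*\le f(y)\le f(x)-t\|g\|+a\,\frac{\phi(L_1 t)}{L_1^2},
\]
and hence
\[
f(x)-f^*\ge \sup_{t\ge 0}\Bigl\{t\|g\|-\frac{a}{L_1^2}\phi(L_1 t)\Bigr\}.
\]
Convexity is needed only to ensure that $f^*$ is attained and finite. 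In fact this step uses only $f^*\le f(y)$, which holds for any function bounded below.

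I will then choose a convenient $t$ rather than the exact maximizer. A natural choice is $t=\nu\|g\|/a$, i.e. the classical step $\|g\|/a$ shrunk by the factor $\nu$. For it, $L_1t=\nu L_1\|g\|/a\le \nu$ because $L_1\|g\|\le a$. On $[0,\nu]$, since $\phi''(s)=e^s$, we have $\phi(s)\le \tfrac{e^{\nu}}{2}s^2$. Substituting gives
\[
f(x)-f^*\ge \frac{\nu\|g\|^2}{a}-\frac{a}{L_1^2}\cdot\frac{e^\nu}{2}\cdot\frac{\nu^2L_1^2\|g\|^2}{a^2}=\frac{\nu\|g\|^2}{a}\Bigl(1-\frac{\nu e^\nu}{2}\Bigr)=\frac{\nu\|g\|^2}{2a},
\]
where the last equality uses $\nu e^{\nu}=1$. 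This is exactly the claimed bound.

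The only delicate step is the estimate $\phi(s)\le \frac{e^{\nu}}{2}s^2$ for $s\in[0,\nu]$. It follows from Taylor's theorem with Lagrange remainder, $\phi(s)=\tfrac{e^{\xi}}{2}s^2$ for some $\xi\in[0,s]$, together with the monotonicity of $e^{\xi}$. Everything else is direct substitution, and the defining relation of $\nu$ is what makes the constant come out exactly $1/2$. Alternatively, one could use Lemma~\ref{lemma:techlemma1} with the same $t$; since $\|x-y\|=t$, it gives the identical computation with the factor $\tfrac{e^{L_1t}}{2}t^2\le\tfrac{e^{\nu}}{2}t^2$.
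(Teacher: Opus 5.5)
Your proof is correct. The paper gives no proof of this lemma and simply imports it from \cite{gorbunov2024methodsl0l1}; your argument is the standard one behind it: evaluate the descent inequality (Lemma~\ref{lemma:techlemma1}, or equivalently Lemma~\ref{lemma:my} with $\phi(s)\le\frac{e^{\nu}}{2}s^2$ on $[0,\nu]$) at $y=x-\frac{\nu}{L_0+L_1\|\nabla f(x)\|}\nabla f(x)$, so that $L_1\|y-x\|\le\nu$ and the relation $\nu e^{\nu}=1$ turns $\nu-\frac{\nu^2e^{\nu}}{2}$ into exactly $\frac{\nu}{2}$.

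Your remark that convexity plays no role is also right, since only $f^*=\inf f>-\infty$ is used, but you should drop the claim that convexity ensures $f^*$ is finite and attained, which is false in general (consider $x\mapsto x$ or $x\mapsto e^{x}$ on $\mathbb{R}$).
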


Although Lemma \ref{lemma:polyak} provides an explicit uniform bound on $M$, this theoretical value can be excessively large in practice, leading to overly conservative step sizes and slow convergence. To mitigate this, if prior estimates of $L_0$ and $L_1$ are available, the global constant $M$ can be replaced by a dynamic local estimate $M_k$. By inverting the inequality from Lemma \ref{lemma:techlemma2}, the current suboptimality gap $f(x_k) - f^*$ directly constrains the maximum possible value of $\|\nabla f(x_k)\|$. Solving this relation at each iteration yields a tighter, adaptive upper bound $M_k$, taken as the largest root of the corresponding equation. This leads to the following adaptive variant of Algorithm~\ref{alg:PolyakAlgo}.

\begin{algorithm}[H]
\caption{$(L_0,L_1)$-Gradient Descent with Adaptive Polyak Stepsizes for Comparison Oracle}
\label{alg:AdaptivePolyakAlgo}
\begin{algorithmic}[1]
\Require Starting point $x_0$, optimal function value $f^*$, number of iterations $N$, constants $L_0$, $L_1$.
\For{$k=0,\dots,N-1$}
    \State Compute $M_k$ as the largest solution of
    \[
        \frac{\nu M_k^2}{2(L_0+L_1M_k)}
        = f(x_k)-f^*
    \]
    \State $x_{k+1} \gets x_k - \dfrac{f(x_k)-f^*}{M_k}\gk$
\EndFor
\State \Return $\hat x_N := \arg\min_{k<N} f(x_k)$
\end{algorithmic}
\end{algorithm}

At present, we do not provide a theoretical convergence analysis for this adaptive variant. Nevertheless, as demonstrated in the numerical experiments (denoted as \emph{Adaptive} on the graphs), it consistently outperforms the version based on the uniform bound \(M\), suggesting that the adaptive estimate provides a substantially less conservative approximation of the gradient norm.

\section{Numerical Experiments}

In this section, we verify the theoretical findings through numerical experiments on the synthetic convex function
\[
\hat F:\mathbb{R}^d\to\mathbb{R},\qquad
\hat F(x)=\frac{L_0L_1^2}{24}\sum_{i=1}^d x_i^4+\frac{L_0}{2}\sum_{i=1}^d x_i^2+f_{\mathrm{opt}},
\]
which is $(L_0,L_1)$-smooth  for any \(L_0,L_1>0\) (see proof in the Subsection \ref{proof:l0l1}) and has the global optimum \(f^*=f_{\mathrm{opt}}\). The experiments reported in Figure~1 were conducted with \(L_0=10\) and \(f_{\mathrm{opt}}=1\), while varying the parameters \(L_1\), \(\delta\), and the initial point $x_0$. For the Polyak-type method, the parameter \(M\) was set to \(10L_1\) in all experiments. For Normalized Gradient Descent, changing parameter $\hat\beta$ did not significantly affect convergence, and the final was one set to $0.1$. 

The empirical results are consistent with the theoretical predictions. In particular, the convergence rate of NGD is highly sensitive to the choice of the initial point, as illustrated in the upper row of Figure~1, whereas the Polyak-type method exhibits substantially weaker dependence on initialization. At the same time, the Polyak-type method demonstrates faster practical convergence, although, in agreement with the theory, its objective values are not necessarily monotone. Finally, both algorithms are noticeably affected by the parameter \(L_1\), while increasing the approximation error \(\delta\) leads to the expected deterioration in the convergence behavior.

\begin{figure}[htbp]
    \centering
    \includegraphics[width=1\textwidth]{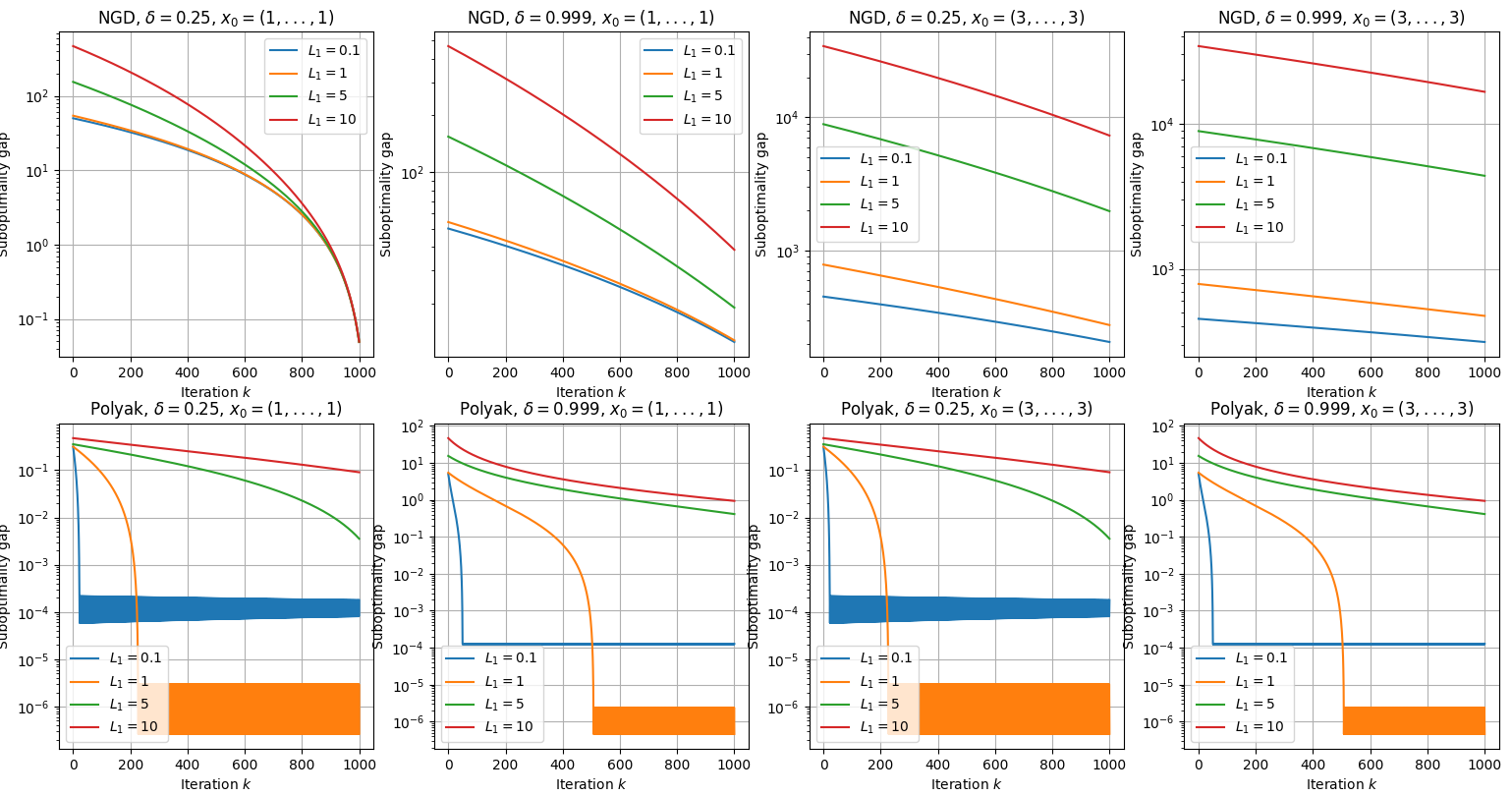}
    \caption{Convergence behavior of NGD (top row) and the Polyak method (bottom row) on the synthetic \((L_0,L_1)\)-smooth objective function \(\hat F\) for different values of \(L_1\), initialization point \(x_0\), and approximation error \(\delta\).}
    \label{fig:}
\end{figure}

\begin{figure}[htbp]\label{fig:fig2}
    \centering
    \includegraphics[width=1\textwidth]{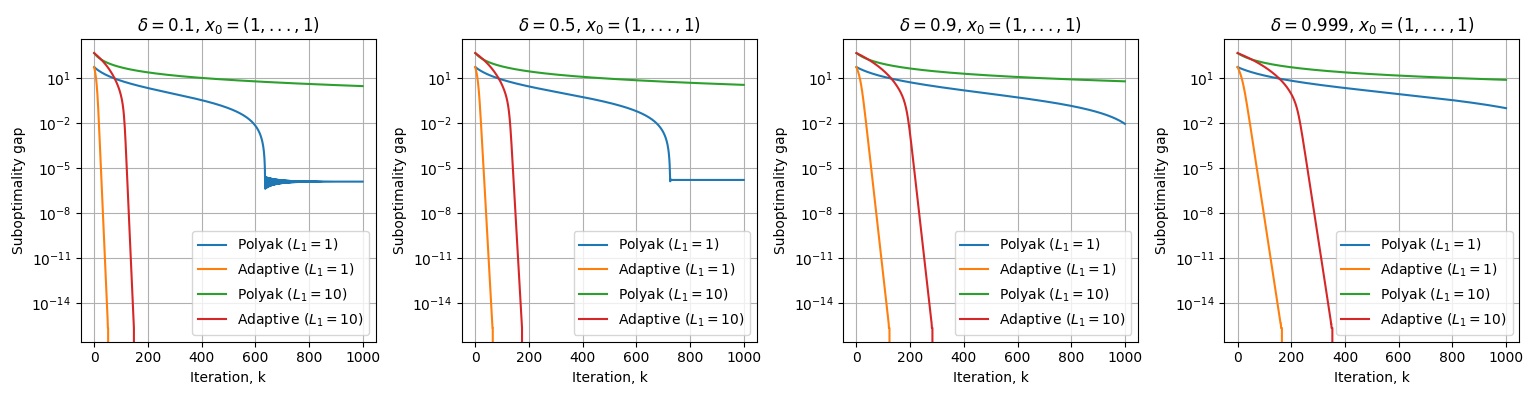}
    \caption{Convergence behavior of Polyak and adaptive Polyak on the synthetic \((L_0,L_1)\)-smooth objective function \(\hat F\) for different values of \(L_1\) and approximation error \(\delta\).}
    \label{fig:}
\end{figure}
\bibliographystyle{splncs04}
\bibliography{bib}

\section{Missing proofs}

\subsection{Proof of Lemma \ref{lemma:my}}\label{proof:my_lemma}

The following derivation adheres to the framework established in \cite{chen2023generalizedsmooth}, extending their results to derive our primary contribution.

We define the auxiliary function $H(\theta)$ as$$H(\theta) := L_0 \theta + L_1 \int_{0}^{\theta} \|\nabla f(z_u)\| \, du,$$
where $z_u := (1-u)x + uy$. As demonstrated in \cite{chen2023generalizedsmooth}, $H(\theta)$ satisfies the differential inequality:
\begin{equation*}
H'(\theta) \le L_0 + L_1 \| y - x \| H(\theta) + L_1 \| \nabla f(x) \|.
\end{equation*}
Applying Grönwall's inequality yields:
\begin{equation*}
L_1\|y-x\|H(\theta) \le (L_0 + L_1\|\nabla f(x)\|)(\exp(L_1\|y-x\|\theta) - 1).
\end{equation*}
Invoking Lemma A.1 from \cite{chen2023generalizedsmooth}, we bound the gradient variation by:
\begin{equation*}
\| \nabla f(y) - \nabla f(x) \| \le \left( L_0 + L_1 \int_0^1 \| \nabla f(z_\theta) \| \, d\theta \right) \| y - x \|.
\end{equation*}
Recognizing that the term in the parentheses corresponds to $H(1)$, we substitute the Grönwall estimate to obtain:
\begin{equation}\label{eq_new1}
\| \nabla f(y) - \nabla f(x) \| \le \frac{1}{L_1}\left( (L_0 + L_1 \| \nabla f(x) \|) \exp(L_1 \| y - x \|) - 1\right).
\end{equation}
At this point, we have proved the first statement of Lemma \ref{lemma:my}. 
The final estimation is derived through the following integration:
\begin{align*}
f(y) - f(x) - \langle \nabla f(x), y - x \rangle 
&\le \int_0^1 \| \nabla f(z_\theta) - \nabla f(x) \| \| y - x \| \, d\theta \\
&= \frac{\| y - x \|}{L_1} (L_0 + L_1 \| \nabla f(x) \|) \int_0^1 (\exp(\theta L_1 \| y - x \|) - 1) \, d\theta \\
&= (L_0 + L_1 \| \nabla f(x) \|) \frac{\phi(L_1 \| y - x \|)}{L_1^2}.
\end{align*}
\subsection{Proof of Theorem \ref{theo:ngd}}\label{subsec:ngd_proof}

The iteration step of the algorithm can be rewritten as follows:
\begin{equation*}
    x_{k+1} = x_k - \beta_k \frac{\nfk}{\nnfk} - \beta_k \underbrace{\left( \gk - \fnfk \right)}_{e_k}, \quad \|e_k\| \le \delta.
\end{equation*}

Next, we expand the squared distance to the optimal solution $x^*$:
\begin{equation}\label{eq:ngm_main}
\begin{aligned}
    \Rkn^2 &= \left\| x_k - x^* - \beta_k \frac{\nfk}{\nnfk} - \beta_k e_k \right\|^2 \\
    &\le \Rk^2 + \beta_k^2(1 + \delta^2)
     + 2\beta_k \inner{e_k}{x^* - x_k}
     \\&+ 2\beta_k^2 \inner{\fnfk}{e_k}
     - 2\beta_k \underbrace{\inner{\fnfk}{x_k - x^*}}_{v_k} \\
    &\le \Rk^2 - 2\beta_k v_k + 2\Rk \beta_k \delta + \beta_k^2(1 + \delta)^2.
\end{aligned}
\end{equation}

Defining $R_k := \|x_k - x^*\|$, we proceed by induction to show $R_k \le 2R_0$ for all $k \le N$.
The base case $k=0$ holds trivially ($R_0 \le 2R_0$).
Assume $R_k \le 2R_0$ holds for all $k < m \le N$. We now show $R_m \le 2R_0$.

Summing inueqalities \ref{eq:ngm_main} and applying quasi-convexity yields:
\begin{equation*}
    R^2_{m} \le R_0^2 + 2\delta \sum_{k=0}^{m-1} \beta_k R_k + (1 + \delta)^2 \sum_{k=0}^{m-1} \beta_k^2<R_0^2+4\delta R_0\sqrt{N+1}\hat\beta+(1+\delta)^2\hat\beta^2.
\end{equation*}
To guarantee $R_m \le 2R_0$, it suffices to show the right-hand side is bounded by $4R_0^2$. We define the corresponding condition as a quadratic function in $\delta$:$$\mathcal{E}(\delta) = \underbrace{\hat{\beta}^2}_{a}\delta^2 + \underbrace{(2\hat{\beta}^2 + 4R_0\sqrt{N+1}\hat{\beta})}_{b}\delta + \underbrace{\hat{\beta}^2 - 3R_0^2}_{c}.$$Since $a > 0$ and $b > 0$, the parabola opens upward with its vertex in the negative half-plane ($-\frac{b}{2a} < 0$). Provided $\mathcal{E}(0) = c < 0$ (i.e., $\hat{\beta}^2 < 3R_0^2$), $\mathcal{E}(\delta) \le 0$ holds for all $\delta$ up to the positive root $\delta_+$. By bounding $\delta_+$ from below, we establish a sufficient upper bound for $\delta$:$$\delta_+ = \frac{-b + \sqrt{b^2 - 4ac}}{2a} = \frac{-2c}{b + \sqrt{b^2 - 4ac}} > \frac{-2c}{2b} = -\frac{c}{b} = \frac{3R_0^2 - \hat{\beta}^2}{2\hat{\beta}^2 + 4R_0\sqrt{N+1}\hat{\beta}}.$$Therefore, choosing any $\delta \le -\frac{c}{b}$ ensures $\mathcal{E}(\delta) \le 0$, which yields $R_m^2 \le 4R_0^2$, implying $R_m \le 2R_0$. This completes the induction.

Next, we will get back to equation \ref{eq:ngm_main}.
Summing these inequalities from $k = 0$ to $N$, we obtain:
\begin{equation*}
    R_{N+1} \le R_0^2 - 2\sum_{k=0}^N \beta_k v_k + 2\delta \sum_{k=0}^N \beta_k R_k + (1 + \delta)^2 \sum_{k=0}^N \beta_k^2.
\end{equation*}

Introducing the notation $v^*_N := \min_{0 \le k \le N} v_k$,we deduce:
\begin{equation*}
    v^*_N \le \frac{R_0^2 + 4\delta R_0 \sum_{k=0}^N \beta_k + (1 + \delta)^2 \sum_{k=0}^N \beta_k^2}{2\sum_{k=0}^N \beta_k} 
    \le \underbrace{\frac{R_0^2 + (1 + \delta)^2 \sum_{k=0}^N \beta_k^2}{2\sum_{k=0}^N \beta_k}}_{S_1} + \underbrace{2\delta R_0}_{S_2}.
\end{equation*}

Let $f_N^* := \min_{0 \le k \le N} f(x_k)$ and $k_0 = \arg\min_{0 \le k \le N} v_k$, so $v_{k_0} = v^*_N$. By definition of $f^*_N$, we have $f^*_N \le f(x_{k_0})$. Applying Lemma \ref{lemma:lemma_nesterov}, we have:
\begin{equation*}
    f_N^* - f^* \le f(x_{k_0})-f^*\le\max_{z} \bigl\{ f(z) - f^* : \|z - x^*\| \le v_N^* \bigr\}.
\end{equation*}

Using Lemma \ref{lemma:my} and the property $\phi(t) \le \frac{3t^2}{6 - 2t}$ for all $t \in [0, 3)$ (see \cite[Lemma 2.6]{vankov2024optimizingl0l1}), it follows that:
\begin{equation*}
    f_N^* - f^* \le \frac{L_0}{L_1^2} \phi(L_1 v_N^*) \le \frac{3L_0 (v_N^*)^2}{6 - 2L_1 v_N^*}.
\end{equation*}

To guarantee convergence, we require $v_N^* \le \frac{3}{2L_1}$ and $v_N^* \le \sqrt{\frac{\varepsilon}{L_0}}$. These bounds are satisfied provided that both terms $S_1$ and $S_2$ do not exceed
\begin{equation*}
    \frac{\mathcal{M}}{2}, \quad \text{where} \quad \mathcal{M} := \min\left\{\frac{3}{2L_1}, \sqrt{\frac{\varepsilon}{L_0}}\right\}.
\end{equation*}
Equivalently, the above conditions reduce to the following two inequalities:
\begin{equation*}
        \dfrac{R_0^2 + (1 + \delta)^2 \sum_{k=0}^N \beta_k^2}{2\sum_{k=0}^N \beta_k} \le \dfrac{\mathcal{M}}{2};\quad\quad\quad
        2\delta R_0 \le \dfrac{\mathcal{M}}{2}.
\end{equation*}
The second condition obviously bounds $\delta$. Using a definition of $\hat \beta_k$, the first inequality gives:
\begin{equation*}
    \frac{R_0^2 + (1 + \delta)^2 \hat{\beta}^2}{2\hat{\beta}\sqrt{N+1}} \le \frac{\mathcal{M}}{2}.
\end{equation*}

Rearranging for the total number of iterations $N$, we obtain:
\begin{equation*}
    N + 1 \ge \left( \frac{R_0^2 + (1 + \delta)^2 \hat{\beta}^2}{\mathcal{M} \hat{\beta}} \right)^2 
    = \max\left\{\frac{4L_1^2}{9}, \frac{L_0}{\varepsilon}\right\} \left( \frac{R_0^2 + (1 + \delta)^2 \hat{\beta}^2}{\hat{\beta}} \right)^2.
\end{equation*}
Bounding $\delta$ from above by $1$, we get the final estimation.

Finally, minimizing the function $\hat{\beta} \mapsto \frac{R_0^2 + 4 \hat{\beta}^2}{\hat{\beta}}$ by using AM-GM inequality, we find the optimal parameter choice to be $\hat{\beta} = \frac{R_0}{2}$.

\subsection{Proof of Lemma \ref{lemma:polyak}}\label{proof:lemma_polyak}

First, assume that $\nnfk>c$. We will bound the squared distance from $x_{k+1}$ to an optimal point:
\begin{align*}
    \|x_{k+1} - x^*\|^2 &= \left\| x_k - x^* - \frac{\fk - f^*}{M} \gk \right\|^2 \\
    &= \|x_k - x^*\|^2 - \frac{2(\fk - f^*) \langle x_k - x^*, \gk \rangle}{M} + \left(\frac{\fk-f^*}{M}\right)^2.
\end{align*}

Now, we will expand the second term:
\begin{align*}
    \langle x_k - x^*, \gk \rangle &= \left\langle x_k - x^*, \frac{\nabla \fk}{\|\nabla \fk\|} \right\rangle + \left\langle x_k - x^*, \gk - \frac{\nabla \fk}{\|\nabla \fk\|} \right\rangle \\
    &\ge \left\langle x_k - x^*, \frac{\nfk}{\nnfk} \right\rangle - \delta\|x_k-x^*\|,
\end{align*}
where the last inequality follows directly from Assumption \ref{assum:approx}.

Combining the last two inequalities, we obtain:
\begin{equation}\label{eq:polyak_to}
    \begin{aligned}
            \|x_{k+1} - x^*\|^2 &\le \|x_k - x^*\|^2 - \frac{2(\fk - f^*) \langle x_k - x^*, \nfk \rangle}{M\nnfk} \\
    &\quad + \frac{2(\fk-f^*)\|x_k-x^*\|\delta}{M} + \left(\frac{\fk-f^*}{M}\right)^2 \\
    &\le \Rk^2-\frac{2(\fk-f^*)^2}{M\nnfk}+\\&+\frac{2(\fk-f^*)\|x_k-x^*\|\delta}{M}+\left(\frac{\fk-f^*}{M}\right)^2  \\
    &\le \Rk^2-\frac{c(\fk-f^*)^2}{M\nnfk^2}+\frac{2(\fk-f^*)\|x_k-x^*\|\delta}{M}\\
    &\le \Rk^2-\frac{c(\fk-f^*)\nu}{2M(L_0+L_1\nnfk)}+\frac{2(\fk-f^*)\|x_k-x^*\|\delta}{M},
    \end{aligned}
\end{equation}

where we use boundedness of the gradient from below and Lemma \ref{lemma:techlemma2} in the last two inequalities.

We divide the proof into two cases based on the gradient norm: $\nnfk \ge \frac{L_0}{L_1}$ and $\nnfk < \frac{L_0}{L_1}$. 

Without loss of generality, we assume $c < \frac{L_0}{L_1}$, since otherwise the first case directly implies the second statement of Lemma \ref{lemma:polyak}, and the second case remains unchanged with the condition strengthened $\nnfk \ge \frac{L
_0}{L_1}>c$.

\textbf{Case 1:} $\nnfk < \frac{L_0}{L_1}$.

As easy to see, in that case, $L_0+L_1\nnfk<2L_0$.
Then, from \ref{eq:polyak_to} we have an estimation:
\begin{align}\label{eq:delta_bound2}
    \|x_{k+1}-x^*\|^2\le \Rk^2-\frac{c\nu(\fk-f^*)}{4ML_0}+\frac{2(\fk-f^*)\Rk\delta}{M}.
\end{align}
Now, to guaranty monotonic decrease of distance from $k-$th point to an optimal one, we require the next boundedness of $\delta$:
\begin{align}\label{eq:polyak_first_delta_bound}
    \delta \le \frac{c\nu}{16L_0\Rk}.
\end{align}
\textbf{Case 2:} $\nnfk \ge \frac{L_0}{L_1}$.
In this case, the following inequality obviously holds: 
\begin{equation}\label{eq:obv}
    L_0 +L_1\nnfk\le 2L_1\nnfk,
\end{equation}
which, with help of Lemma \ref{lemma:techlemma2}, induces the next estimation:
\begin{equation}\label{eq:tech_lemma2}
    \frac{\nu\nnfk}{4}\le\frac{\nu\nnfk^2}{2(L_0+L_1\nnfk)}\le \fk-f^*.
\end{equation}

Now, we can estimate the squared distance at the $(k+1)$-th iteration more precisely:
\begin{equation}\label{eq:delta_bound1}
    \begin{aligned}
        \|x_{k+1}-x^*\|^2 &\le \Rk^2 - \frac{c\nu(\fk-f^*)}{2M(L_0+L_1\nnfk)} + \frac{2(\fk-f^*)\|x_k-x^*\|\delta}{M} \\
        &\le \Rk^2 - \frac{c\nu(\fk-f^*)}{4M\nnfk L_1} + \frac{2(\fk-f^*)\|x_k-x^*\|\delta}{M} \\
        &\le \Rk^2 - \frac{c\nu^2}{16ML_1} + \frac{2(\fk-f^*)\|x_k-x^*\|\delta}{M},
    \end{aligned}
\end{equation}
where in the last two inequalities we consequently used inequalities \ref{eq:obv} and \ref{eq:tech_lemma2}.

As in the previous case, to ensure that the distance $R_k :=\|x_k-x^*\|$ decreases, we bound $\delta$:
\begin{align}\label{eq:polyak_second_delta_bound}
    \delta \le \frac{c\nu^2}{64L_1(\max_{k<N}f(x_k)-f^*)R_k}\le \frac{c\nu^2}{64L_1(f(x_k)-f^*)R_k}.
\end{align}

In order to make the bounds independent of $N$, we can estimate suboptimality gap from above using Lemmas \ref{lemma:my} and \ref{lemma:techlemma1} with $y=x_k$ and $x=x^*$ as follows:
\[
f(x_k)-f^*\le \dfrac{1}{L_1^2}\phi(L_1R_k);\quad\quad f(x_k)-f^*\le \dfrac{L_0}{2}\exp({L_1R_k})R_k^2.\]

Then, we have to require $\delta$ to satisfy 
\begin{align}\label{eq:tocite1}
\delta&\le \dfrac{c\nu^2}{32R_k}\max\{\frac{1}{L_0L_1\exp\left(L_1R_k\right)R_k^2},\,\, \frac{L_1}{2\phi(L_1R_k)}\}.
\end{align}

That is, if both of inequalities \ref{eq:polyak_first_delta_bound} and \ref{eq:polyak_second_delta_bound} (or \ref{eq:polyak_first_delta_bound} and \ref{eq:polyak_second_delta_bound} in setting with no dependency to $N$) hold, then, by induction, the sequence $R_k$  monotonically decreases, and in both requirements we can change $R_k$ to $R_0$.

Nevertheless, since the number of iterations is fixed in our setting, the original bound is sufficient. Moreover, it is typically much less conservative than the $N$-independent estimate above and is therefore more practical to use.

It follows that the iterates remain in the compact ball
\(
\mathcal{X}:=\{x\in\mathbb R^d:\|x-x^*\|\le \|x_0-x^*\|\}
\). Since $\|\nabla f(x)\|$ is continuous, the Weierstrass theorem guarantees there is a constant $M_{max} > 0$ such that $\|\nabla f(x)\| \le M_{max}$ for all $x \in \mathcal{X}$. Therefore, $\|\nabla f(x_k)\| < M_{max}$ for all \(k\) where $\nnfk > c$. Finally, by setting $y = x$ and $x=x^*$ in the first statement of Lemma \ref{lemma:my}, we obtain the upper bound for the constant $M_{max}$. Notice that during the proof we did not rely on any assumptions about $M$. And only now it becomes possible to tell that this constant is theoretically bounded.

Next, we will prove the second statement. Assume that $\|\nabla f(x_{k})\| > c$ for the first time during iterations. Then, at the previous iteration, $\|\nabla f(x_{k-1})\|\le c$, and as we already proved, $\Rk\le\|x_{k-1}-x^*\|$. By exploiting convexity, we deduce that $f(x_k) - f^* \le \inner{\nfk}{x_k-x^*}\le\nnfk\Rk\le\varepsilon$, thereby concluding the proof of Lemma \ref{lemma:polyak}.

\subsection{Proof of Theorem \ref{theo:polyak}}\label{subsec:theo_polyak}
In this subsection, we will use the definitions of Subsection \ref{proof:lemma_polyak}.

Substituting bounds of $\delta$, derived in the Section \ref{proof:lemma_polyak} into inequalities \ref{eq:delta_bound2} and \ref{eq:delta_bound1} respectively, we prove the first statement of Theorem \ref{theo:polyak}.

Next, define the index sets $\mathcal{N} := \{0, 1, \dots, N\}$ and $\mathcal{P} := \left\{k \in \mathcal{N} : \nnfk \ge \frac{L_0}{L_1}\right\}$, and let $P := |\mathcal{P}|$. 
From \eqref{eq:polyak_1_st}, we have
\begin{equation*}
    \|x_{N+1}-x^*\|^2 \le R_0^2 - \frac{\nu^2 c P}{32 M L_1^2} - \frac{\nu c}{8 M L_0}\sum_{k \in \mathcal{N} \setminus \mathcal{P}} (\fk - f^*).
\end{equation*}
Rearranging this inequality yields
\begin{equation}\label{eq:eqned}
    \frac{8 M L_0}{\nu c}\|x_{N+1}-x^*\|^2 + \sum_{k \in \mathcal{N} \setminus \mathcal{P}} (\fk - f^*) \le \frac{8 M L_0 R_0^2}{\nu c} - \frac{\nu L_0 P}{4L_1^2}.
\end{equation}
Since the left-hand side of \eqref{eq:eqned} is non-negative, we deduce the bound
\[
    P \le \frac{32 R_0^2 M L_1^2}{\nu^2 c}.
\]
Consequently, choosing a number of iterations  
\[
    N > \frac{32 R_0^2 M L_1^2}{\nu^2 c} - 1
\]
guarantees that $N > P$. This ensures that the set $\mathcal{N} \setminus \mathcal{P}$ is non-empty, allowing us to bound the optimality gap at $\hat{x}_N$ as follows:
\begin{equation*}
    (N - P + 1)(f(\hat{x}_N) - f^*) \le \sum_{k \in \mathcal{N} \setminus \mathcal{P}} (\fk - f^*) \le \frac{8 M L_0 R_0^2}{\nu c} - \frac{\nu L_0 P}{4 L_1^2}, 
\end{equation*}
or, dividing by $(N-P+1)$:
\begin{equation}\label{eq:eq123}
    f(\hat{x}_N) - f^* \le \frac{8 M L_0 R_0^2}{\nu c(N - P + 1)} - \frac{\nu L_0 P}{4 L_1^2(N - P + 1)}, 
\end{equation}
Now, to prove \eqref{eq:polyak_second_st}, we will consider the right-hand side of \eqref{eq:eq123} as a function with respect to $P$, which can be rewritten in a more convenient form:

\begin{align*}
    \varphi(P) &:=  \frac{8ML_0R_0^2}{c\nu(N - P + 1)} + \frac{\nu L_0}{4L_1^2} - \frac{\nu L_0(N + 1)}{4L_1^2(N - P + 1)}.
\end{align*}
The derivative of this function is the following
\begin{equation*}
    \varphi'(P) = \frac{8ML_0R_0^2}{c\nu(N - P + 1)^2} - \frac{\nu L_0(N + 1)}{4L_1^2(N - P + 1)^2}.
\end{equation*}
Since \(    N > \frac{32 R_0^2 M L_1^2}{\nu^2 c} - 1
\), we have $\varphi'(P)<0$, which means that $\varphi$ is a decreasing function. That implies that for $P>0$ we have $\varphi(P)\le\varphi(0)$, that is,
\[
\frac{8ML_0R_0^2}{c\nu(N - P + 1)} - \frac{\nu L_0 P}{4L_1^2(N - P + 1)}\le\frac{8ML_0R_0^2}{\nu(N+1)}.
\]
Combining the last inequality with \eqref{eq:eq123}, we establish the second result of Theorem \ref{theo:polyak}.

Now, to prove the third statement of Theorem \ref{theo:polyak}, we consider equation \ref{assum:convexity} to hold with $\mu > 0$. In that case, obviously, \(\fk - f^*\ge \frac{\mu}{2}\Rk^2\), which implies
\[
\|x_{k+1}-x^*\|^2\le \left(1-\frac{\mu\nu c}{16ML_0}\right)\Rk^2,
\]
when $\nnfk \le \frac{L_0}{L_1}$.

Then we can bound the distance between the $N-$th point and the optimal one for $N < P$ as follows:
\[
\|x_N-x^*\|^2\le\left(1-\frac{\mu\nu c}{16ML_0}\right)^{N-P}R_0^2-\frac{\nu^2c}{32ML_1^2}\sum_{k\in\mathcal{P}}{\left(1-\frac{\mu\nu c}{16ML_0}\right)^{p_k}}, 
\]
where $p_k := |\{0, \dots, k\}\setminus \mathcal{P}|$.

As $|\mathcal{P}|=P,$ then $p_k\le N-P$ for all $k \in \mathcal{P}$, which allows us to combine both terms of the right-hand side of the previous inequality as follows:
\[
\|x_N-x^*\|^2\le\left(1-\frac{\mu\nu c}{16ML_0}\right)^{N-P}\left(R_0^2-\frac{\nu^2cP}{32ML_1^2}\right)\le\left(1-\frac{\mu\nu c}{16ML_0}\right)^{N-P}R_0^2, 
\]
which finished the proof of Theorem \ref{theo:polyak}.

\subsection{Proof that Function $\hat F$ is $(L_0,L_1)$-smooth for any $L_0,L_1 > 0$}\label{proof:l0l1}

First, project onto the $i$-th coordinate axis by setting all coordinates except $i$ to zero. For the Hessian of this projection, we have
\[
|\nabla^2\hat F_i(x)| = \frac{L_0L_1^2}{2}x_i^2+L_0\le L_1|x_i|\underbrace{\left(\dfrac{L_0L_1^2}{6}x_i^2+L_0\right)}_{\ge2\sqrt{L_0^2L_1^2x_i^2/6}\ge L_0L_1|x_i|/2}+L_0=L_1|\nabla \hat F_i(x)|+L_0.
\]
Next, using Proposition 2.4.2 in \cite{vankov2024optimizingl0l1}, we complete the proof.
\end{document}

%% file: math_commands.tex
\usepackage{amsmath,amsfonts,bm}

\def\eqref#1{equation~\ref{#1}}

\def\1{\bm{1}}

\DeclareMathAlphabet{\mathsfit}{\encodingdefault}{\sfdefault}{m}{sl}
\SetMathAlphabet{\mathsfit}{bold}{\encodingdefault}{\sfdefault}{bx}{n}



%% file: bib.bib
@book{Nesterov2018,
  author    = {Yurii Nesterov},
  title     = {Lectures on Convex Optimization},
  series    = {Springer Optimization and Its Applications},
  volume    = {137},
  publisher = {Springer},
  edition   = {2},
  year      = {2018},
  doi       = {10.1007/978-3-319-91578-4},
  isbn      = {978-3-319-91577-7}
}

@article{nesterov2017gradientfree,
  author  = {Nesterov, Yurii and Spokoiny, Vladimir},
  title   = {Random Gradient-Free Minimization of Convex Functions},
  journal = {Foundations of Computational Mathematics},
  volume  = {17},
  number  = {2},
  pages   = {527--566},
    doi     = {https://doi.org/10.1007/s10208-015-9296-2}
,
  year    = {2017},
}

@article{gasnikov2024quasiconvex,
  title={On Quasi-Convex Smooth Optimization Problems by a Comparison Oracle},
  author={Gasnikov, A. V. and Alkousa, M. S. and Lobanov, A. V. and Dorn, Y. V. and Stonyakin, F. S. and Kuruzov, I. A. and Singh, S. R.},
  journal={arXiv preprint arXiv:2411.16745},
  year={2024},
  doi={10.48550/arXiv.2411.16745},
}

@misc{zhouFL,
  author = {Zhou, Shenglong and Li, Geoffrey Ye},
  title  = {Federated Learning via Inexact ADMM},
  year   = {2023},
  url    = {https://arxiv.org/abs/2204.10607}
}

@article{liu2020primer,
  author  = {Liu, Sijia and Chen, Pin-Yu and Kailkhura, Bhavya and Zhang, Gaoyuan and Hero, Alfred and Varshney, Pramod K.},
  title   = {A Primer on Zeroth-Order Optimization in Signal Processing and Machine Learning},
  journal = {IEEE Signal Processing Magazine},
  year    = {2020},
    doi     = {https://doi.org/10.1109/MSP.2020.3003837}
,
  pages={25--43},
  volume={37},
  number={5}
}

@article{chen2023generalizedsmooth,
  title={Generalized-Smooth Nonconvex Optimization is As Efficient As Smooth Nonconvex Optimization},
  author={Chen, Ziyi and Zhou, Yi and Liang, Yingbin and Lu, Zhaosong},
  journal={arXiv preprint arXiv:2303.02854},
  year={2023},
  eprint={2303.02854},
  archivePrefix={arXiv},
  primaryClass={math.OC},
  url={https://arxiv.org/abs/2303.02854}
}

@article{zhang2020gradient,
    title={Why gradient clipping accelerates training: A theoretical justification for adaptivity},
    author={Zhang, Jingzhao and He, Tianxing and Sra, Suvrit and Jadbabaie, Ali},
    journal={arXiv preprint arXiv:1905.11881},
    year={2020}
}

@article{vyguzov2025frankwolfe,
    title={Frank-Wolfe Algorithms for (L0, L1)-smooth functions},
    author={Vyguzov, A.A. and Stonyakin, F.S.},
    journal={arXiv preprint},
    volume={arXiv:2510.16468},
    year={2025},
    url={https://doi.org/10.48550/arXiv.2510.16468},
    archivePrefix={arXiv},
    eprint={2510.16468},
    primaryClass={math.OC}
}

@article{koloskova2023revisiting,
    title={Revisiting Gradient Clipping: Stochastic bias and tight convergence guarantees},
    author={Koloskova, Anastasia and Hendrikx, Hadrien and Stich, Sebastian U.},
    journal={arXiv preprint},
    volume={arXiv:2305.01588},
    year={2023},
    url={https://doi.org/10.48550/arXiv.2305.01588},
    archivePrefix={arXiv},
    eprint={2305.01588},
    primaryClass={cs.LG}
}

@inproceedings{zhang2020improved,
    title={Improved Analysis of Clipping Algorithms for Non-convex Optimization},
    author={Zhang, Bohang and Jin, Jikai and Fang, Cong and Wang, Liwei},
    booktitle={Advances in Neural Information Processing Systems (NeurIPS)},
    year={2020},
    volume={33},
    pages={15511--15521},
    url={https://doi.org/10.48550/arXiv.2010.02519},
    archivePrefix={arXiv},
    eprint={2010.02519},
}

@InProceedings{pmlr-v89-vaswani19a,
  title = 	 {Fast and Faster Convergence of SGD for Over-Parameterized Models and an Accelerated Perceptron},
  author =       {Vaswani, Sharan and Bach, Francis and Schmidt, Mark},
  booktitle = 	 {Proceedings of the Twenty-Second International Conference on Artificial Intelligence and Statistics},
  pages = 	 {1195--1204},
  year = 	 {2019},
  editor = 	 {Chaudhuri, Kamalika and Sugiyama, Masashi},
  volume = 	 {89},
  series = 	 {Proceedings of Machine Learning Research},
  month = 	 {16--18 Apr},
  publisher =    {PMLR},
  url = 	 {https://proceedings.mlr.press/v89/vaswani19a.html},
  
}

@article{gao2024momentum_polyak,
    title={Non-convex Stochastic Composite Optimization with Polyak Momentum},
    author={Gao, Yuan and Rodomanov, Anton and Stich, Sebastian U.},
    journal={arXiv preprint arXiv:2403.02967},
    year={2024},
    eprint={2403.02967},
    archivePrefix={arXiv},
    primaryClass={math.OC},
    url={https://arxiv.org/abs/2403.02967}
}

@article{takezawa2024parameterfree,
    title={Parameter-free Clipped Gradient Descent Meets Polyak},
    author={Takezawa, Yuki and Bao, Han and Sato, Ryoma and Niwa, Kenta and Yamada, Makoto},
    journal={Advances in Neural Information Processing Systems (NeurIPS)},
    year={2024},
    eprint={2405.15010},
    archivePrefix={arXiv},
    primaryClass={cs.LG},
    url={https://arxiv.org/abs/2405.15010}
}

@article{loizou2020SGD_polyak,
    title={Stochastic Polyak Step-size for SGD: An Adaptive Learning Rate for Fast Convergence},
    author={Loizou, Nicolas and Vaswani, Sharan and Laradji, Issam and Lacoste-Julien, Simon},
    journal={Proceedings of the 24th International Conference on Artificial Intelligence and Statistics (AISTATS)},
    year={2021},
    eprint={2002.10542},
    archivePrefix={arXiv},
    primaryClass={math.OC},
    url={https://arxiv.org/abs/2002.10542}
}

@article{hazan2019polyak,
    title={Revisiting the Polyak step size},
    author={Hazan, Elad and Kakade, Sham},
    journal={arXiv preprint arXiv:1905.00313},
    year={2019},
    eprint={1905.00313},
    archivePrefix={arXiv},
    primaryClass={math.OC},
    url={https://arxiv.org/abs/1905.00313}
}

@article{vankov2024optimizingl0l1,
  title={Optimizing $(L_0, L_1)$-Smooth Functions by Gradient Methods},
  author={Vankov, Daniil and Rodomanov, Anton and Nedich, Angelia and Sankar, Lalitha and Stich, Sebastian U.},
  journal={arXiv preprint arXiv:2410.10800},
  year={2024},
  eprint={2410.10800},
  archivePrefix={arXiv},
  primaryClass={math.OC},
  url={https://arxiv.org/abs/2410.10800}
}

@article{gorbunov2024methodsl0l1,
  title={Methods for Convex $(L_0,L_1)$-Smooth Optimization: Clipping, Acceleration, and Adaptivity},
  author={Gorbunov, Eduard and Tupitsa, Nazarii and Choudhury, Sayantan and Aliev, Alen and Richt{\'a}rik, Peter and Horv{\'a}th, Samuel and Tak{\'a}{\v c}, Martin},
  journal={arXiv preprint arXiv:2409.14989},
  year={2024},
  eprint={2409.14989},
  archivePrefix={arXiv},
  primaryClass={math.OC},
  url={https://arxiv.org/abs/2409.14989}
}

@misc{comparisons,
  author = {Zhang, Chenyi and Li, Tongyang},
  title  = {Comparisons Are All You Need for Optimizing Smooth Functions},
  year   = {2024},
  url    = {https://arxiv.org/abs/2405.11454}
}
